\numberwithin{equation}{section}
\newcommand{\st}{~|~}
\newcommand{\R}{\mathbb{R}}
\newcommand{\sC}{\mathscr{C}}
\newcommand{\Z}{\mathbb{Z}}
\newcommand{\B}{\mathcal{B}}
\newcommand{\D}{\mathtt{Del}}
\renewcommand{\L}{\mathcal{L}}
\renewcommand{\phi}{\varphi}
\renewcommand{\epsilon}{\varepsilon}
\renewcommand{\emptyset}{\varnothing}
\newcommand{\dee}{\partial}
\newcommand{\Cone}{\mathtt{Cone}}
\DeclareMathOperator{\del}{\mathrm{del}}
\DeclareMathOperator{\In}{\mathrm{in}}
\DeclareMathOperator{\link}{\mathrm{link}}
\newcommand{\Link}{\mathtt{Link}}
\newcommand{\1}{^{-1}}
\newcommand{\la}{\langle}
\newcommand{\ra}{\rangle}
\newtheorem{thm}[equation]{Theorem}
\newtheorem{prop}[equation]{Proposition}
\newtheorem{lem}[equation]{Lemma}
\newtheorem{cor}[equation]{Corollary}
\theoremstyle{definition}
\newtheorem{defn}[equation]{Definition}
\newtheorem{ex}[equation]{Example}
\def\ZZ{\mathbb{Z}}
\newcommand{\garrow}[1]{\stackrel{#1}{\longrightarrow}}
\def\Q{\mathsf{Q}}
\tikzstyle{mutable}=[inner sep=0.5mm,circle,draw,minimum size=2mm]
\tikzstyle{frozen}=[inner sep=.9mm,rectangle,draw]
\theoremstyle{definition}
\newtheorem{rem}[equation]{Remark}
\newtheorem{rems}[equation]{Remarks}
\newcommand{\gmat}[2][ccccccccccccccccccccccccccccccccc]{\left(\begin{array}{#1} #2\\ \end{array}\right)}
\title[Lower bound cluster algebras]{Lower bound cluster algebras:\\presentations, cohen-macaulayness, and normality}
\author{Greg Muller}
\address[Greg Muller]{University of Michigan}
\email{morilac@umich.edu}
\author{Jenna Rajchgot}
\address[Jenna Rajchgot]{University of Michigan}
\email{rajchgot@umich.edu}
\author{Bradley Zykoski}
\address[Bradley Zykoski]{University of Virginia}
\email{bpz8nr@virginia.edu}
\thanks{$2010$ \emph{Mathematics Subject Classification.} Primary 13F60, Secondary 05E40,13F55}
\thanks{\emph{Keywords:} Cluster algebras, lower bound cluster algebras, combinatorial commutative algebra, Stanley-Reisner complexes}
\thanks{The third author was supported by NSF grant DMS-1001764, as an REU project at the University of Michigan in 2015.}
\begin{document}

\maketitle

\begin{abstract}
We give an explicit presentation for each lower bound cluster algebra. Using this presentation, we show that each  lower bound algebra Gr\"obner degenerates to the Stanley-Reisner scheme of a vertex-decomposable ball or sphere, and is thus Cohen-Macaulay. Finally, we use Stanley-Reisner combinatorics and a result of Knutson-Lam-Speyer to show that all lower bound algebras are normal.
\end{abstract}

\section{Introduction and statement of results}

Cluster algebras are a family of combinatorially-defined commutative algebras which were introduced by Fomin and Zelevinsky at the turn of the millennium to axiomatize and generalize patterns appearing in the study of dual canonical bases in Lie theory \cite{FZ02}.  Since their introduction, cluster algebras have been discovered in the rings of functions on many important spaces, such as semisimple Lie groups, Grassmannians, flag varieties, and Teichm\"uller spaces \cite{BFZ05,Sco06,GLS08,GSV05}.\footnote{A more interesting and morally correct statement is that each of these spaces possesses a stratification such that each stratum naturally has a cluster algebra in its ring of functions.}

In each of these examples, the cluster algebra is realized as the coordinate ring of a smooth variety.  This makes it all the more surprising that the varieties associated to general cluster algebras can be singular; in fact, they can possess such nightmarish pathologies as a non-Noetherian singularity \cite{MulLA}.  Various approaches have been introduced to mitigate this. 

\begin{itemize}
	\item Restricting to a subclass of cluster algebras with potentially better behavior: acyclic cluster algebras \cite{BFZ05}, locally acyclic cluster algebras \cite{MulLA,BMRS15}, or cluster algebras with a maximal green sequence \cite{BDP14,MulMGS}.
	\item Replacing the cluster algebra by a closely-related algebra with potentially better behavior: upper cluster algebras \cite{BFZ05,BMRS15}, the span of convergent theta functions \cite{GHKK}, or \textbf{lower bound algebras} \cite{BFZ05}.
\end{itemize}
In this note, we study the algebraic and geometric behavior of lower bound algebras.\footnote{More specifically, we consider lower bound algebras \emph{defined by a quiver} in the body of the paper, and consider the more general context of \emph{geometric type} in Appendix \ref{section: skew}.}

\subsection{Lower bound algebras}

Lower bound algebras were introduced in \cite{BFZ05} as a kind of `lazy approximation' of a cluster algebra, in the following sense.  A cluster algebra is defined to be the subalgebra of a field of rational functions generated by a (usually infinite) set of \emph{cluster variables}, produced by a recursive procedure called \emph{mutation}.  A lower bound algebra is defined to be the subalgebra generated by truncating this process at a specific finite set of steps.  The resulting algebra is contained in the associated cluster algebra and is manifestly finitely generated.

A lower bound algebra is constructed from an \textbf{ice quiver} $\Q$: this is a quiver (i.e. a finite directed graph) without loops or directed $2$-cycles, in which each vertex is designated \textbf{unfrozen} or \textbf{frozen}.
As a matter of convenience, we assume the vertices of $\Q$ have been indexed by the numbers $\{1,2,...,n\}$.  To each unfrozen vertex $i$, we associate a pair of monomials $p_i^+,p_i^-\in \ZZ[x_1,x_2,...,x_n]$ as follows.
\begin{equation}
p_i^+ \coloneqq \prod_{\stackrel{\text{arrows }a\in \Q}{\text{source}(a)=i}}x_{\text{target}(a)},\hspace{1cm} p_i^- \coloneqq \prod_{\stackrel{\text{arrows }a\in \Q}{\text{target}(a)=i}}x_{\text{source}(a)}
\end{equation}
Each vertex then determines a Laurent polynomial $x_i'$, called the \textbf{adjacent cluster variable at $i$}, which is defined by the following formula.\footnote{This is an abuse of terminology.  Technically speaking, a frozen vertex $i$ should not have an adjacent cluster variable $x_i'$, and instead we should include $x_i^{-1}$ as a generator (though this latter step is a matter of some debate).  We are streamlining the process by calling the inverse $x_i^{-1}$ `the adjacent cluster variable at $i$'.}
\begin{equation}\label{eq: mutation}
x_i' \coloneqq \left\{\begin{array}{cc}
x_i^{-1}(p_i^++p_i^-) & \text{if $i$ is unfrozen} \\
x_i^{-1} & \text{if $i$ is frozen}
\end{array}\right\}
\end{equation}
The \textbf{lower bound algebra} $\L(\Q)$ defined by $\Q$ is the subalgebra of $\ZZ[x_1^{\pm1},...,x_n^{\pm1}]$ generated by the variables $x_1,x_2,...,x_n$ and the adjacent cluster variables $x_1',x_2',...,x_n'$.

\begin{figure}[h!t]
\begin{tikzpicture}[scale=1.5]
	\node[mutable] (1)  at (-1,0) {$1$};
	\node[mutable] (2) at (0,0) {$2$};
	\node[mutable] (3) at (1,.5) {$3$};
	\node[frozen] (4) at (1,-.5) {$4$};
	\draw[-angle 90,relative,out=15,in=165] (1) to (2);
	\draw[-angle 90,relative,out=-15,in=-165] (1) to (2);
	\draw[-angle 90] (2) to (3);
	\draw[-angle 90] (2) to (4);
\end{tikzpicture}
\caption{An ice quiver (the unique frozen vertex is depicted as a square)}
\label{fig: mutationexample}
\end{figure}
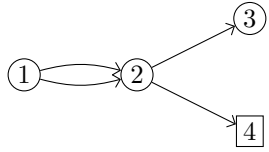

\begin{ex}
Consider the ice quiver $\Q$ in Figure \ref{fig: mutationexample}.  The four adjacent cluster variables are below.
\[ x_1' = \frac{x_2^2+1}{x_1},\;\;\; x_2'=\frac{x_3x_4+x_1^2}{x_2},\;\;\; x_3'=\frac{1+x_2}{x_3},\;\;\; x_4'=\frac{1}{x_4} \]
\end{ex}

%

\subsection{Relations in $\L(\Q)$}

We first consider the problem of finding relations among the generators of $\L(\Q)$.
%
%
Each adjacent cluster variable satisfies a \textbf{defining relation} immediately from its definition.
\begin{equation}
\forall i \text{ unfrozen} ,\; \;\; x_i'x_i = (p_i^++p_i^-) 
\end{equation}
\begin{equation}
\forall i \text{ frozen} ,\; \;\;x_i'x_i =1
\end{equation}
A more interesting class of relations is given by the following proposition.
\begin{prop}[The cycle relations]\label{prop: cyclerels}
For each directed cycle of unfrozen vertices $v_1\rightarrow v_2 \rightarrow \cdots \rightarrow v_k\rightarrow v_{k+1}=v_1$ in $\Q$, the following \textbf{cycle relation} holds.
\begin{equation}\label{eq: cyclerel}
\sum_{\stackrel{S\subset \{1,2,...,k\}}{S\cap (S+1)=\emptyset}} (-1)^{|S|}\left(\prod_{i\in S} \frac{p_{v_i}^+p_{v_{i+1}}^-}{x_{v_i}x_{v_{i+1}}}\right)\left(\prod_{i\not\in S\cup (S+1)} x'_{v_i}\right) = \prod_{i=1}^k\frac{p_{v_i}^+}{x_{v_i}}+ \prod_{i=1}^k\frac{p_{v_i}^-}{x_{v_i}}
\end{equation}
\end{prop}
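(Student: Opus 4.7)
The plan is to clear denominators and recast \eqref{eq: cyclerel} as a polynomial identity in the monomials $p_{v_i}^{\pm}$, and then verify it by a short inclusion-exclusion argument. To execute the first step, I would multiply both sides of \eqref{eq: cyclerel} by $\prod_{i=1}^{k} x_{v_i}$ and use the defining relation $x_{v_i} x'_{v_i} = p_{v_i}^+ + p_{v_i}^-$, which applies since each $v_i$ is unfrozen. The condition $S \cap (S+1) = \emptyset$ ensures that $S$ and $S+1$ are disjoint subsets of the cyclic index set $\Z/k\Z$, so the denominator $\prod_{i \in S} x_{v_i} x_{v_{i+1}}$ cancels cleanly with $\prod_{j \in S \cup (S+1)} x_{v_j}$, while the remaining $x_{v_j}$ absorb the $x'_{v_j}$ factors into $(p_{v_j}^+ + p_{v_j}^-)$. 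The relation therefore reduces to the polynomial identity
\[
\sum_{S} (-1)^{|S|} \Bigl(\prod_{i \in S} p_{v_i}^+ p_{v_{i+1}}^-\Bigr) \prod_{j \notin S \cup (S+1)} \bigl(p_{v_j}^+ + p_{v_j}^-\bigr) = \prod_{i=1}^{k} p_{v_i}^+ + \prod_{i=1}^{k} p_{v_i}^-,
\]
where $S$ still ranges over admissible subsets of $\{1,\dots,k\}$.

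To verify this polynomial identity, I would expand each factor $(p_{v_j}^+ + p_{v_j}^-)$ and regroup the resulting double sum by the induced sign function $\sigma \colon \Z/k\Z \to \{+,-\}$ that records whether $p_{v_i}^+$ or $p_{v_i}^-$ is chosen in a given monomial. The pairs $(S, \sigma)$ contributing to the coefficient of $\prod_i p_{v_i}^{\sigma(i)}$ are exactly those with $\sigma|_{S} \equiv +$ and $\sigma|_{S+1} \equiv -$. The key observation is that the original disjointness condition $S \cap (S+1) = \emptyset$ becomes automatic under this constraint: if $i \in S$, then $\sigma(i+1) = -$ already prevents $i+1 \in S$. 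Hence $S$ ranges freely over subsets of the cyclic descent set $D_\sigma \coloneqq \{i \in \Z/k\Z \mid \sigma(i) = +,\, \sigma(i+1) = -\}$.

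The coefficient of $\prod_i p_{v_i}^{\sigma(i)}$ on the left is therefore $\sum_{S \subset D_\sigma} (-1)^{|S|}$, which vanishes unless $D_\sigma = \emptyset$. On the cyclic index set $\Z/k\Z$, $D_\sigma = \emptyset$ forces $\sigma$ to be constant, since any $\sigma$ taking both values must contain a $+ \to -$ transition somewhere around the cycle. The two constant sign functions contribute precisely $\prod_i p_{v_i}^+$ and $\prod_i p_{v_i}^-$, matching the right-hand side. I expect the only genuine difficulty to be the careful bookkeeping of the $x_{v_i}$ and $x'_{v_i}$ cancellations in the first step; once the polynomial identity is set up, the sign-function regrouping makes the result nearly transparent.
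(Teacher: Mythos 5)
Your proof is correct and follows essentially the same route as the paper: clear denominators, expand $\prod_i(p_{v_i}^++p_{v_i}^-)$ over choices of sign at each vertex (the paper's ``choice graphs'' are exactly your sign functions $\sigma$, and its $2$-cycles are your descent set $D_\sigma$), and kill all non-constant $\sigma$ by an alternating sum over subsets $S$. The only difference is organizational: you extract the coefficient of each $\prod_i p_{v_i}^{\sigma(i)}$ directly as $\sum_{S\subseteq D_\sigma}(-1)^{|S|}$, whereas the paper runs the inclusion-exclusion iteratively over the families $C(S)$; your version is arguably the cleaner write-up of the same argument.
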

\noindent We note that the expressions on either side reduce to polynomials in the generators, despite the presence of fractions.  Also note that choosing a different initial vertex $v_1$ in the same directed cycle does not change the corresponding cycle relation.


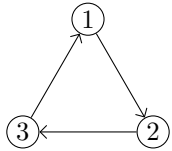
\begin{figure}[h!t]
\begin{tikzpicture}[scale=1]
	\node[mutable] (1)  at (90:1) {$1$};
	\node[mutable] (2) at (-30:1) {$2$};
	\node[mutable] (3) at (210:1) {$3$};
	\draw[-angle 90] (1) to (2);
	\draw[-angle 90] (2) to (3);
	\draw[-angle 90] (3) to (1);
\end{tikzpicture}
\caption{An ice quiver (no frozen vertices)}
\label{fig: 3cycle}
\end{figure}

\begin{rem}
A quiver $\Q$ is called \textbf{acyclic} if it has no directed cycles of unfrozen vertices.  The unifying theme of this paper is the use of the cycle relations to generalize results about $\L(\Q)$ which were previously known only when $\Q$ is acyclic (that is, when there are no cycle relations).
\end{rem}

\begin{ex}
Let $\Q$ be the ice quiver in Figure \ref{fig: 3cycle}.  The three adjacent cluster variables are 
\[ x_1'= \frac{x_2+x_3}{x_1},\;\;\; x_2'=\frac{x_3+x_1}{x_2},\;\;\; x_3'=\frac{x_1+x_2}{x_3} \]
The defining relations here are obtained by clearing the denominators above.  A non-trivial directed $3$-cycle starting at any vertex determines the cycle relation
\[ x_1'x_2'x_3' - x_1'-x_2'-x_3'=2\]
which may be verified by direct computation.
\end{ex}

\subsection{A presentation of $\L(\Q)$}

We may ask whether there are other relations in $\L(\Q)$ that are not an immediate consequence of the preceding relations; or more concretely, whether the defining relations and the cycle relations generate the entire ideal of relations among the generators.

Explicitly, we consider the homomorphism of rings\footnote{The $y$-variables introduced here have no relation to the \emph{$y$-variables} or \emph{coefficient variables} introduced in \cite{FZ07}.}
\[ \pi :\ZZ[x_1,x_2,...,x_n,y_1,y_2,...y_n]\longrightarrow \ZZ[x_1^{\pm1},x_2^{\pm1},...,x_n^{\pm1} ]\]
\[ \forall i, \;\;\; \pi(x_i) = x_i,\;\;\; \pi(y_i)=x_i'\]
The image of this homomorphism is $\L(\Q)$, and so $K_\Q \coloneqq \ker(\pi)$ is the \textbf{ideal of relations} among the generators of $\L(\Q)$, where each adjacent cluster variable $x_i'$ has been replaced by the abstract variable $y_i$.  The homomorphism $\pi$ descends to an isomorphism
\[ \ZZ[x_1,x_2,...,x_n,y_1,y_2,...,y_n]/K_\Q\garrow{\sim} \L(\Q) \]

We will say a directed cycle $v_1\rightarrow v_2 \rightarrow \cdots \rightarrow v_k\rightarrow v_{k+1}=v_1$ is \textbf{vertex-minimal} if no vertex appears more than once and there is no directed cycle whose vertex set is a proper subset of $\{v_1,v_2...,v_k\}$.

\begin{thm}\label{thm: relations}
The ideal of relations $K_\Q$ is generated by the following elements.
\begin{itemize}
	\item For each unfrozen vertex $i$, 
\begin{equation}\label{eq: mutationrel}
y_ix_i - p_i^+-p_i^-
\end{equation}
	\item For each frozen vertex $i$, 
\begin{equation}\label{eq: inverserel}
y_ix_i-1 
\end{equation}
	\item For each vertex-minimal 
	directed cycle of unfrozen vertices $v_1\rightarrow v_2 \rightarrow \cdots \rightarrow v_k\rightarrow v_{k+1}=v_1$,
\begin{equation}\label{eq: cyclerel}
\sum_{\stackrel{S\subset \{1,2,...,k\}}{S\cap (S+1)=\emptyset}} (-1)^{|S|}\left(\prod_{i\in S} \frac{p_{v_i}^+p_{v_{i+1}}^-}{x_{v_i}x_{v_{i+1}}}\right)\left(\prod_{i\not\in S\cup (S+1)} y_{v_i}\right) - \prod_{i=1}^k\frac{p_{v_i}^+}{x_{v_i}}- \prod_{i=1}^k\frac{p_{v_i}^-}{x_{v_i}}
\end{equation}
which simplifies to a polynomial.
\end{itemize}
\end{thm}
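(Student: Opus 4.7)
The plan is a Gr\"obner basis argument. Let $J$ denote the ideal generated by the three families of relations listed in the statement; by Proposition \ref{prop: cyclerels} and the evident checks on the defining relations, $J \subseteq K_\Q$. I will prove equality by matching initial ideals under a suitable monomial order.

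Equip $\ZZ[x_1, \ldots, x_n, y_1, \ldots, y_n]$ with the weight function $w(x_i) = 0$, $w(y_i) = 1$, refined to a monomial order $<$ by any tiebreaker. Both defining relations $y_i x_i - p_i^+ - p_i^-$ and $y_i x_i - 1$ have leading term $y_i x_i$, since their remaining summands have weight $0$; the cycle relation for $v_1 \to \cdots \to v_k \to v_1$ has leading term $\prod_{i=1}^{k} y_{v_i}$, because the $S$-indexed summand has weight $k - 2|S|$, making $S = \emptyset$ the unique summand of maximum weight $k$ (the right-hand side has weight $0$). Let $I$ be the monomial ideal generated by these leading terms. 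Then
\[
I \;\subseteq\; \mathrm{in}_<(J) \;\subseteq\; \mathrm{in}_<(K_\Q),
\]
so $J = K_\Q$ follows from the reverse containment $\mathrm{in}_<(K_\Q) \subseteq I$. Equivalently, the \emph{standard monomials} $x^\alpha y^\beta$ -- those with $\alpha_i \beta_i = 0$ for all $i$ and with $\mathrm{supp}(\beta) := \{i : \beta_i > 0\}$ not containing the vertex set of any directed cycle of $\Q$ -- must map to a linearly independent subset of $\L(\Q)$ under $\pi$.

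This linear independence is the crux. Writing $b_i^\pm := \deg(p_i^\pm)$, each standard image is the Laurent polynomial
\[
L_{\alpha, \beta} := \pi(x^\alpha y^\beta) = x^{\alpha - \beta} \prod_{i=1}^{n} (p_i^+ + p_i^-)^{\beta_i},
\]
whose Newton polytope is the zonotope $(\alpha - \beta) + \sum_i \beta_i [b_i^-, b_i^+]$ and which carries coefficient $1$ at the extreme exponent $\gamma^+(\alpha, \beta) := \alpha - \beta + \sum_i \beta_i b_i^+$. My proposed approach is to define a strict relation $(\alpha, \beta) \prec (\alpha', \beta')$ by $(\alpha, \beta) \ne (\alpha', \beta')$ together with $\gamma^+(\alpha, \beta) \in \mathrm{supp}(L_{\alpha', \beta'})$. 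Granting that $\gamma^+$ is injective on standard pairs, $\prec$ is a strict partial order: any $\prec$-cycle would force all participating $\gamma^+$ to coincide, and hence to collapse by injectivity. Then in any hypothetical relation $\sum c_{\alpha, \beta} L_{\alpha, \beta} = 0$ I pick a $\prec$-maximal contributing pair $(\alpha, \beta)$ and extract the coefficient of $x^{\gamma^+(\alpha, \beta)}$: maximality prevents any other pair from contributing to that monomial, forcing $c_{\alpha, \beta} = 0$; iterating kills each coefficient.

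The one substantive step is the injectivity of $\gamma^+$ on standard pairs. Setting $\delta = \beta - \beta'$ and $\epsilon = \alpha - \alpha'$, the equation $\gamma^+(\alpha, \beta) = \gamma^+(\alpha', \beta')$ becomes $\epsilon = (I - B^+)\delta$, with $B^+$ the non-negative matrix $(B^+)_{ij} = \#(\text{arrows } j \to i)$. One partitions $\mathrm{supp}(\delta)$ according to which of $\beta, \beta'$ is positive at each vertex, reads off sign constraints on $\epsilon$ from the standardness conditions $\alpha_i \beta_i = \alpha'_i \beta'_i = 0$, and combines these with the acyclicity of $\mathrm{supp}(\beta)$ and $\mathrm{supp}(\beta')$ to conclude $\delta = 0$. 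The subtle case -- and the main obstacle I anticipate -- is when $\mathrm{supp}(\beta)$ and $\mathrm{supp}(\beta')$ are individually acyclic but their union contains a directed cycle of $\Q$; the cycle-avoidance on each individual support must then conspire with the non-negativity of $\alpha, \alpha'$ to rule out any non-trivial $\delta$.
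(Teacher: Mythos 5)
Your setup coincides with the paper's: the same ideal $J$, the same class of monomial orders ($y$-variables much more expensive than $x$-variables), the same identification of leading terms, and the same reduction to showing $\mathrm{in}_<(K_\Q)\subseteq I$. Where you diverge is in how that last containment is established, and that is exactly where your argument has a genuine gap. The paper does \emph{not} prove linear independence of standard monomials directly: given $f\in K_\Q$ with $\In_<(f)\notin I$, it observes that the $y$-support of $\In_<(f)$ contains no directed cycle, freezes all unfrozen vertices outside that support to obtain an \emph{acyclic} ice quiver $\Q'$, lifts the inclusion $\L(\Q)\hookrightarrow\L(\Q')$ to a map $\mu$ with $\pi'\circ\mu=\pi$ that preserves $\In_<(f)$, and then invokes the known acyclic presentation \cite[Corollary 1.17]{BFZ05} to derive a contradiction. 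Your route, if completed, would be independent of that external input, which would be a real gain; but as written it is not completed.

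Concretely, three claims in your Newton-polytope argument are asserted rather than proved, and each one fails in general and must be rescued by the cycle-avoidance hypothesis. (1) The coefficient of $x^{\gamma^+(\alpha,\beta)}$ in $L_{\alpha,\beta}$ equals $1$ only if the exponent $\sum_i\beta_i b_i^+$ is attained by a \emph{unique} choice of signs in the expansion of $\prod_i(p_i^++p_i^-)^{\beta_i}$, i.e.\ only if there is no nontrivial relation $\sum_i c_i(b_i^+-b_i^-)=0$ with $0\le c_i\le\beta_i$. Around a directed cycle such relations do exist (for the $3$-cycle, $b_i^+-b_i^-$ sum to zero) --- this is precisely the phenomenon that produces the cycle relations --- so you must show acyclicity of $\mathrm{supp}(\beta)$ forbids them, and this is not obvious since $b_i^\pm$ involve the neighbours of $i$, not $i$ itself. (2) Your argument that a $\prec$-cycle collapses reduces to the same pointedness question over the \emph{union} of the supports involved, and that union may well contain a directed cycle of $\Q$ even when each individual support does not. (3) The injectivity of $\gamma^+$ on standard pairs is flagged by you as "the one substantive step" and "the main obstacle I anticipate," with only a sketch of a sign analysis; the subtle case you identify (two acyclic supports whose union contains a cycle) is where the entire content of the theorem sits. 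Until (1)--(3) are proved, the proposal establishes only the containments $I\subseteq\In_<(J)\subseteq\In_<(K_\Q)$, which were already immediate.
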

\noindent The theorem is true without the vertex-minimal condition, which is used here to reduce the set of relations.

\begin{ex}\label{ex:3vertexgrobner}
Let $\Q$ be the ice quiver in Figure \ref{fig: 3cycle}.  By Theorem \ref{thm: relations}, $\L(\Q)$ is isomorphic to the quotient of $\ZZ[x_1,x_2,x_3,y_1,y_2,y_3]$ by the ideal $K_\Q$ generated by the following $4$ relations.
\[ K_\Q =\langle y_1x_1-x_2-x_3,y_2 x_2-x_3-x_1, y_3x_3-x_1-x_2, y_1y_2y_3-y_1-y_2-y_3-2\rangle \]
\end{ex}

\subsection{A Gr\"obner basis for $K_\Q$}

We prove Theorem \ref{thm: relations} by means of a stronger result, that the given generators are a Gr\"obner basis for the ideal of relations $K_\Q$. 
Recall that, given a polynomial ring with a monomial order $<$, a \textbf{Gr\"obner basis} of an ideal $I$ is a generating set $\{g_1,g_2,..,g_k\}$ of $I$ satisfying the additional condition that $\{\text{in}_<(g_1),\text{in}_<(g_2),...,\text{in}_<(g_k)\}$ is a generating set of $\text{in}_<(I)$.  

The monomial orders relevant to us are those in which the $y$-variables are much more expensive than the $x$-variables, that is $\mathbf{x}^{\mathbf{\alpha}}\mathbf{y}^{\mathbf{\beta}}> \mathbf{x}^{\mathbf{\gamma}}\mathbf{y}^{\mathbf{\delta}}$ whenever $\sum_i \beta_i$ is larger than $\sum_i \delta_i$.
An example of such a monomial order is the lexicographical order where the variables are ordered by
\[
y_1>y_2>\cdots>y_n>x_1>x_2>\cdots>x_n.
\]


\begin{thm}\label{thm: grobner}
For any monomial order of $\ZZ[x_1,x_2,...,x_n,y_1,y_2,...,y_n]$ in which all of the $y$-variables are much more expensive than all of the $x$-variables, the polynomials given in Theorem \ref{thm: relations} are a Gr\"obner basis for $K_\Q$. Consequently, the initial ideal $\text{in}_< K_\Q$ is squarefree monomial ideal with generating set
\[
\{x_iy_i \mid 1\leq i\leq n\}\cup \{ y_{v_1}y_{v_2}\cdots y_{v_k}\mid v_1\rightarrow v_2\rightarrow \cdots \rightarrow v_k\rightarrow v_{k+1} = v_1  \textrm{ is a vertex-minimal cycle in }\Q\}.
\]
\end{thm}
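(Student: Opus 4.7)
The plan is to compute the initial terms of the listed generators, observe they generate a monomial ideal $J \subseteq \mathrm{in}_<(K_\Q)$, and then prove the reverse inclusion by showing that the standard monomials of $\Z[x, y]/J$ (those monomials not divisible by any generator of $J$) are $\Z$-linearly independent when viewed as elements of $\L(\Q) \cong \Z[x,y]/K_\Q$.

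I would first verify the initial terms. For the defining relation $y_i x_i - (p_i^+ + p_i^-)$ (or $y_i x_i - 1$ for frozen $i$), the unique term containing a $y$-variable is $x_i y_i$, so its initial term is $x_i y_i$. For the cycle relation, each summand indexed by $S$ has $y$-degree $k - 2|S|$ (since $|S| = |S+1|$ and $S \cap (S+1) = \emptyset$), uniquely maximized at $y$-degree $k$ when $S = \emptyset$; the subtracted terms $\prod_i p_{v_i}^\pm / x_{v_i}$ contain no $y$-variables. So the initial term of the cycle relation is $y_{v_1} \cdots y_{v_k}$. Letting $J$ denote the monomial ideal generated by these initial terms, we have $J \subseteq \mathrm{in}_<(K_\Q)$, and the standard monomials of $\Z[x,y]/J$ are exactly $x^\alpha y^\beta$ with $\alpha_i \beta_i = 0$ for all $i$ and $\mathrm{supp}(\beta)$ inducing an acyclic subquiver of $\Q$ (otherwise $\mathrm{supp}(\beta)$ would contain the vertex set of some vertex-minimal cycle).

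For linear independence, the key computation is
\[
\pi(x^\alpha y^\beta) = x^{\alpha - \beta} \prod_{j \text{ unfrozen}} (p_j^+ + p_j^-)^{\beta_j},
\]
from which I would extract that the minimum $x_i$-exponent appearing in this Laurent polynomial is exactly $\alpha_i - \beta_i$ for each $i$: the shift contributes $\alpha_i - \beta_i$, and each factor $(p_j^+ + p_j^-)^{\beta_j}$ contributes non-negatively in $x_i$ (since $p_j^\pm$ is $x_j$-free by the no-loops hypothesis, and the absence of $2$-cycles permits choosing expansion terms with zero $x_i$-contribution independently in each factor). Because the standard-monomial condition $\alpha_i \beta_i = 0$ recovers both $\alpha_i$ and $\beta_i$ from the signed difference $\alpha_i - \beta_i$, distinct standard monomials have distinct min-exponent vectors $\alpha - \beta \in \Z^n$. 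Using a cascade under a suitable lex refinement of the coordinate-wise minimum order on Laurent monomials, one should peel off the contributions of distinct standard monomials one by one in any putative vanishing combination $\pi(\sum c_{\alpha, \beta} x^\alpha y^\beta) = 0$, forcing all coefficients to vanish.

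The main obstacle is carrying out this cascade rigorously. The difficulty is that different standard monomials have overlapping Laurent expansions, and the coordinate-wise minimum need not be attained simultaneously across all coordinates by a single term. Here the acyclicity of $\mathrm{supp}(\beta)$ is essential: a topological ordering of $\mathrm{supp}(\beta)$ controls how each factor $(p_j^+ + p_j^-)^{\beta_j}$ contributes to a given coordinate's minimum, enabling an inductive peeling argument along this ordering. If $\mathrm{supp}(\beta)$ contained a directed cycle, the resulting "wrap-around" interactions governed by the cycle relations would obstruct the cascade, which is precisely why the monomials $y_{v_1} \cdots y_{v_k}$ for vertex-minimal cycles must already lie in $J$.
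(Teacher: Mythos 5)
Your computation of the initial terms and the inclusion $J \subseteq \In_<(K_\Q)$ match the paper, and your description of the standard monomials of $J$ (exponents with $\alpha_i\beta_i=0$ and $\mathrm{supp}(\beta)$ spanning no directed cycle) is correct. The strategy of getting the reverse inclusion from linear independence of these standard monomials in $\L(\Q)$ is legitimate in principle and is a genuinely different route from the paper's: the paper argues by contradiction that if $f\in K_\Q$ had $\In_<(f)\notin J$, then the $y$-support of $\In_<(f)$ would contain no directed cycle, so one could freeze every unfrozen vertex outside that support to obtain an acyclic quiver $\Q'$; a lift $\mu$ of the inclusion $\L(\Q)\hookrightarrow\L(\Q')$ preserves the initial term, and the acyclic case of the theorem (\cite[Corollary 1.17]{BFZ05}) then forces $\In_<(f)$ to be divisible by some $x_iy_i$, a contradiction.

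The proposal, however, has a genuine gap at exactly the step that carries the weight of the theorem. Knowing that the coordinate-wise minimum exponent vector of $\pi(x^\alpha y^\beta)$ equals $\alpha-\beta$, and that these vectors distinguish standard monomials, does not yield linear independence: the Laurent monomial $x^{\alpha-\beta}$ need not occur in the support of $\pi(x^\alpha y^\beta)$ at all (the coordinate minima are generally attained by different terms of the expansion), and the supports of distinct standard monomials overlap heavily, so there is no single exposed term to read coefficients off of. You acknowledge this and defer to a ``cascade under a suitable lex refinement'' with ``inductive peeling'' along a topological order of $\mathrm{supp}(\beta)$, but no such argument is given, and it is not routine: for acyclic $\Q$ this independence is precisely the content of \cite[Theorem 1.16 and Corollary 1.17]{BFZ05}, whose proof is nontrivial, and your setting is harder still, since acyclicity of $\mathrm{supp}(\beta)$ does not prevent the factors $(p_j^++p_j^-)^{\beta_j}$ from involving variables $x_i$ at vertices outside $\mathrm{supp}(\beta)$ where $\alpha_i>0$, nor does a topological order of $\mathrm{supp}(\beta)$ by itself control which of the overlapping expansions contributes a given Laurent monomial. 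As written, the argument reduces the theorem to an unproved claim at least as hard as the result itself; to repair it you would either have to carry out the peeling argument in full (essentially reproving and extending the Berenstein--Fomin--Zelevinsky standard monomial theorem), or follow the paper and reduce to the acyclic case, where that theorem can be cited.
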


\begin{rem}
When $\Q$ is acyclic, Theorem \ref{thm: grobner} specializes to Corollary 1.17 in \cite{BFZ05}.  The proof of Theorem \ref{thm: grobner} given in Section \ref{sect:generators} uses \cite[Corollary 1.17]{BFZ05} in an essential way, so our proof is not independent of the original result.
%
\end{rem}

\subsection{Simplicial complexes and Cohen-Macaulayness of lower bound algebras}
From here on, we work over a field $\mathbb{K}$, and consider the $\mathbb{K}$-algebra $\mathcal{L}(\Q)\otimes_\mathbb{Z}\mathbb{K}$. We will still refer to this algebra as a lower bound algebra and, though a slight abuse of notation, will simply denote it by $\mathcal{L}(\Q)$. Similarly, we let $K_\Q$ denote the associated lower bound ideal, so that it is the kernel of the map $\pi:\mathbb{K}[x_1,\dots,x_n,y_1\dots,y_n]\rightarrow \mathcal{L}(\Q)$.

To a squarefree monomial ideal $I\subseteq \mathbb{K}[z_1,\dots,z_n]$, one can associate a simplicial complex $\Delta$ on the vertex set $\{z_1,\dots,z_n\}$. This simplicial complex is called the \textbf{Stanley-Reisner complex} and is defined as follows: $\{z_{i_1},\dots z_{i_r}\}$ is a face of $\Delta$ if and only if the monomial $z_{i_1}\cdots z_{i_r}\notin I$. Observe that the minimal non-faces of $\Delta$ are in one-to-one correspondence with a minimal generating set of $I$. For further information on Stanley-Reisner complexes, see the textbook \cite[Chapter 1]{MillerSturmfels}.

Whenever the Stanley-Reisner complex is a simplicial ball or a sphere,\footnote{More precisely, we mean the geometric realization of the simplicial complex is homeomorphic to a ball or sphere, respectively.  Whenever we refer to a simplicial complex as a topological object, we more precisely mean its geometric realization.} the corresponding face ring $\mathbb{K}[z_1,\dots, z_n]/I$ is a Cohen-Macaulay ring \cite{Munkres}. Furthermore, when $I = \textrm{in}_< J$ for some ideal $J\subseteq \mathbb{K}[z_1,\dots, z_n]$, we may also conclude that $J$ itself is Cohen-Macaulay (see eg. \cite[Proposition 3.1]{BC03}).

\begin{ex}
We continue Example \ref{ex:3vertexgrobner} and observe that the initial ideal $\textrm{in}_< K_\Q$ (for a term order $<$ as in Theorem \ref{thm: grobner}) is $\langle x_1y_1, x_2y_2, x_3y_3, y_1y_2y_3\rangle$. The facets (i.e. the maximal faces) of the associated Stanley-Reisner complex are precisely those $\{z_1,z_2,z_3\}$ where $z_i$ is either $x_i$ or $y_i$ and at least one of $z_1, z_2, z_3$ is an $x_i$. This Stanley-Reisner complex is readily seen to be a simplicial ball, and is pictured in Figure \ref{fig:3vertexball}.

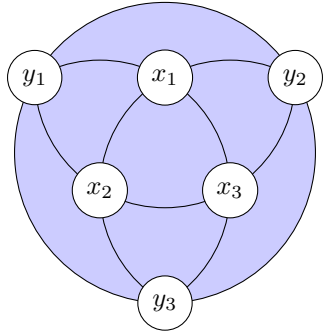
\begin{figure}
\begin{tikzpicture}
	\begin{scope}
		\draw[fill=blue!20] (0,0) circle (2);
		\clip (0,0) circle (2);
		\draw (90:1) circle (1.732);
		\draw (-30:1) circle (1.732);
		\draw (210:1) circle (1.732);
	\end{scope}
	\node[draw,circle,fill=white] at (90:1) {$x_1$};
	\node[draw,circle,fill=white] at (210:1) {$x_2$};
	\node[draw,circle,fill=white] at (330:1) {$x_3$};
	\node[draw,circle,fill=white] at (150:2) {$y_1$};
	\node[draw,circle,fill=white] at (30:2) {$y_2$};
	\node[draw,circle,fill=white] at (270:2) {$y_3$};

\end{tikzpicture}

\caption{The Stanley-Reisner complex for $\la x_1y_1, x_2y_2, x_3y_3, y_1y_2y_3 \ra$}
\label{fig:3vertexball}
\end{figure}
\end{ex}

This example generalizes, and we are able to conclude that all lower bound algebras are Cohen-Macaulay. 

\begin{thm}\label{thm:CM}
Let $\Q$ be a quiver with $n$ vertices, let $K_\Q\subseteq \mathbb{K}[x_1,\dots,x_n,y_1,\dots,y_n]$ be its ideal of relations, and let $<$ be any monomial order where the $y$-variables are much more expensive than the $x$-variables. Let $\Delta_\Q$ be the Stanley-Reisner complex of the squarefree monomial ideal $\textrm{in}_< K_\Q$.
\begin{enumerate}
\item If $\Q$ is acyclic, then $\Delta_\Q$ is a simplicial sphere.
\item If $\Q$ is not acyclic, then $\Delta_\Q$ is a simplicial ball.
\end{enumerate}
\end{thm}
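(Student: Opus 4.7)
I will prove (i) and (ii) by induction on the number of vertices $n$, using the explicit description of the minimal non-faces of $\Delta_\Q$ given by Theorem~\ref{thm: grobner}: these are exactly the antipodal pairs $\{x_i, y_i\}$ and the supports $\{y_{v_1},\ldots, y_{v_k}\}$ of the vertex-minimal directed cycles of $\Q$. Because any face not containing $y_i$ can always be extended by $x_i$ without violating a non-face, every facet of $\Delta_\Q$ contains exactly one of $x_i$ or $y_i$ for each $i$. Thus the facets are in bijection with subsets $T \subseteq V(\Q)$ for which $\Q|_T$ is acyclic, via
\[
T \longmapsto F_T \coloneqq \{y_i : i \in T\} \cup \{x_i : i \notin T\},
\]
and $\Delta_\Q$ is a pure $(n-1)$-dimensional subcomplex of the boundary $\partial P_n$ of the cross-polytope on the $2n$ vertices $\{x_1,y_1,\ldots,x_n,y_n\}$.

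Part (i) is then immediate: when $\Q$ is acyclic every $T$ is admissible, so $\Delta_\Q = \partial P_n = \{x_1,y_1\} \ast \cdots \ast \{x_n,y_n\}$ is the join of $n$ copies of $S^0$, i.e.\ the $(n-1)$-sphere.

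For (ii), choose a vertex $v$ contained in some vertex-minimal cycle and decompose
\[
\Delta_\Q = \del_{\Delta_\Q}(y_v) \cup \operatorname{star}_{\Delta_\Q}(y_v), \qquad \del_{\Delta_\Q}(y_v) \cap \operatorname{star}_{\Delta_\Q}(y_v) = \link_{\Delta_\Q}(y_v).
\]
Since adjoining $x_v$ to any face not containing $y_v$ never creates a non-face, one obtains $\del_{\Delta_\Q}(y_v) = \operatorname{star}_{\Delta_\Q}(x_v) = \{x_v\} \ast \link_{\Delta_\Q}(x_v)$. A direct check identifies $\link_{\Delta_\Q}(x_v)$ with $\Delta_{\Q\setminus v}$, because the vertex-minimal cycles of $\Q\setminus v$ are precisely those of $\Q$ that avoid $v$. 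The inductive hypothesis says that $\Delta_{\Q\setminus v}$ is a vertex-decomposable ball or sphere of dimension $n-2$, so the cone $\del_{\Delta_\Q}(y_v)$ is a vertex-decomposable $(n-1)$-ball.

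The main obstacle will be to show that $\link_{\Delta_\Q}(y_v)$ is itself a vertex-decomposable $(n-2)$-ball sitting in $\partial\,\del_{\Delta_\Q}(y_v)$; granted this, the cone $\operatorname{star}_{\Delta_\Q}(y_v) = \{y_v\} \ast \link_{\Delta_\Q}(y_v)$ attaches to the ball $\del_{\Delta_\Q}(y_v)$ along a boundary ball, yielding the desired $(n-1)$-ball $\Delta_\Q$. The facets of $\link_{\Delta_\Q}(y_v)$ correspond to subsets $T \subseteq V(\Q)\setminus\{v\}$ with $\Q|_{T\cup\{v\}}$ acyclic, so its minimal non-faces include the path-supports $C\setminus\{v\}$ for each vertex-minimal cycle $C\ni v$. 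These are paths rather than cycles, so $\link_{\Delta_\Q}(y_v)$ is not itself the Stanley-Reisner complex of a lower bound algebra and the inductive hypothesis does not apply directly. I would address this by strengthening the inductive statement to cover the broader class of complexes on the vertex set $\{x_i, y_i\}$ whose minimal non-faces consist of the antipodal pairs together with an arbitrary antichain of subsets of $\{y_1,\ldots,y_n\}$; the link above belongs to this class, and one can carry out the vertex decomposition by further splitting along a neighbor of $v$ in such a cycle, reducing by a secondary induction on the number of vertex-minimal cycles through $v$ to the acyclic base case handled in (i).
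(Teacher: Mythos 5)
Your setup is sound and your strategy is, in essence, the one the paper uses: the facets of $\Delta_\Q$ are the sets $F_T$ for $T$ inducing an acyclic subquiver, part (i) is the cross-polytope boundary, and part (ii) proceeds by splitting off a $y$-vertex and gluing the cone $\{y_v\}\ast\link_{\Delta_\Q}(y_v)$ onto the ball $\del_{\Delta_\Q}(y_v)$ along the link. Your identification $\del_{\Delta_\Q}(y_v)=\{x_v\}\ast\link_{\Delta_\Q}(x_v)$ with $\link_{\Delta_\Q}(x_v)\cong\Delta_{\Q\smallsetminus v}$ is correct and is a mild variant of the paper's treatment of the deletion. Moreover, the enlarged class you propose (minimal non-faces given by the antipodal pairs together with an arbitrary antichain of subsets of the $y$-vertices) is exactly the class $\Delta(S,\sC,Y)$ that the paper introduces for the same reason: the link of $y_v$ has non-faces $C\smallsetminus\{v\}$ that no longer come from cycles of a quiver.

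The genuine gap is that the step you yourself label ``the main obstacle'' is never carried out, and the sketch you give for it does not obviously close. Two things must be proved for the gluing to yield a ball: (a) $\link_{\Delta_\Q}(y_v)$ is a vertex-decomposable $(n-2)$-ball, and (b) it is contained in the boundary $\partial\,\del_{\Delta_\Q}(y_v)$ \emph{properly} --- properness matters, since gluing two $(n-1)$-balls along their entire boundary spheres produces a sphere, not a ball. You do not address (b) at all; in the paper this is a separate lemma whose proof requires exhibiting an explicit boundary face of the deletion missed by the link. For (a), your ``secondary induction on the number of vertex-minimal cycles through $v$, splitting along a neighbor of $v$'' is not a well-founded scheme on the enlarged class: once the non-faces are an arbitrary antichain of $y$-subsets there is no quiver, no ``neighbor,'' and no guarantee that further links/deletions reduce your chosen parameter. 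The clean induction (the paper's) is on the number of $y$-vertices: for a $y$-vertex $y_i$ of a complex in the class, both $\link(y_i)$ and $\del(y_i)$ remain in the class with strictly fewer $y$-vertices (the link replaces each non-face $C$ by $C\smallsetminus\{i\}$; the deletion just removes $y_i$ from the vertex set), the base case is the simplex with no $y$-vertices, and one must also check that the link never degenerates to the sphere case $\sC=\emptyset$, $Y=S$ --- which holds because a complex in the ball case has some non-face $C$ not equal to $\{i\}$, whence $C\smallsetminus\{i\}\neq\emptyset$. Until (a) and (b) are established in this or an equivalent form, the argument for part (ii) is incomplete.
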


\noindent We show additional properties of $\Delta_\Q$.  If $\Q$ is acyclic, then $\Delta_\Q$ is the boundary of a cross-polytope.  In both cases, $\Delta_\Q$ satisfies the stronger condition of \emph{vertex-decomposibility}. Details are in Section \ref{sect:combinatorics}.

\begin{cor}
For any $\Q$ the lower bound algebra $\L(\Q)$ over a field $\mathbb{K}$ is Cohen-Macaulay.
\end{cor}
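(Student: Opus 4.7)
The plan is to assemble the corollary directly from Theorem \ref{thm:CM} together with the two general facts about Stanley-Reisner rings recalled just before the statement of Theorem \ref{thm:CM}. I would proceed in three short steps.

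First, fix a monomial order $<$ as in Theorem \ref{thm: grobner}, so that $\text{in}_< K_\Q$ is a squarefree monomial ideal and $\Delta_\Q$ is its Stanley-Reisner complex. By Theorem \ref{thm:CM}, $\Delta_\Q$ is either a simplicial sphere (when $\Q$ is acyclic) or a simplicial ball (otherwise). Second, invoke the Reisner/Munkres criterion (cited from \cite{Munkres}) to conclude that in either case the Stanley-Reisner ring
\[
\mathbb{K}[x_1,\dots,x_n,y_1,\dots,y_n]\big/\text{in}_< K_\Q
\]
is Cohen-Macaulay.

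Third, use the standard principle that Cohen-Macaulayness deforms from the initial ideal back to the original ideal: since $\text{in}_< K_\Q$ is Cohen-Macaulay and arises as the initial ideal of $K_\Q$ with respect to $<$, the quotient $\mathbb{K}[x_1,\dots,x_n,y_1,\dots,y_n]/K_\Q$ is also Cohen-Macaulay (this is precisely \cite[Proposition 3.1]{BC03} as cited). Finally, Theorem \ref{thm: relations} (or rather its reformulation as $\pi$ descending to an isomorphism $\mathbb{K}[x_1,\dots,x_n,y_1,\dots,y_n]/K_\Q \xrightarrow{\sim} \L(\Q)$) identifies this quotient with $\L(\Q)$, giving the claim.

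There is really no obstacle at this stage: all the substantive work has already been carried out in establishing the Gr\"obner basis (Theorem \ref{thm: grobner}) and in verifying that $\Delta_\Q$ is a sphere or ball (Theorem \ref{thm:CM}). The corollary is a one-paragraph application of those results together with the classical Stanley-Reisner machinery recalled in the text.
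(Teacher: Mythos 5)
Your proposal is correct and follows exactly the paper's own route: apply Theorem \ref{thm:CM} to see that $\Delta_\Q$ is a ball or sphere, invoke the Munkres criterion to conclude the face ring $\mathbb{K}[x_1,\dots,x_n,y_1,\dots,y_n]/\mathrm{in}_<K_\Q$ is Cohen--Macaulay, and then lift Cohen--Macaulayness from the initial ideal back to $K_\Q$ via \cite[Proposition 3.1]{BC03}. Nothing is missing; the corollary is indeed just this assembly of previously established results.
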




\begin{rems}
\begin{enumerate}
\item We prove Theorem \ref{thm:CM} by way of a more general result, which gives a larger class of simplicial complexes which are automatically vertex-decomposable balls (see Theorem \ref{homeo}).
\item When $\Q$ is acyclic, $\L(\Q)$ was already known to be Cohen-Macaulay; specifically, \cite[Corollary 1.17]{BFZ05} implies that $\L(\Q)$ is a complete intersection, and, consequently, that it is Cohen-Macaulay.
\end{enumerate}
\end{rems}

\subsection{Normality of lower bound algebras}

Our last main result is the normality of the $\mathbb{K}$-algebra $\L(\Q)$.

\begin{thm}\label{thm:normalityOfLowerBounds}
Every lower bound cluster algebra defined by a quiver is normal.
\end{thm}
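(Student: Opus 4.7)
The plan is to apply Serre's criterion in conjunction with a criterion of Knutson-Lam-Speyer that reads off regularity in codimension one from combinatorics of the Stanley-Reisner complex $\Delta_\Q$. As a starting point, $\L(\Q)$ is an integral domain since it is realized as a subring of the Laurent polynomial ring $\mathbb{K}[x_1^{\pm 1},\ldots,x_n^{\pm 1}]$, and is Cohen-Macaulay by the corollary to Theorem \ref{thm:CM}; in particular it satisfies Serre's condition $S_2$. By Serre's criterion for normality, it therefore suffices to verify Serre's condition $R_1$: the localization of $\L(\Q)$ at each height-one prime is a regular local ring.

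To check $R_1$ I would invoke the Knutson-Lam-Speyer criterion, which applies whenever an ideal admits a Gr\"obner degeneration to a squarefree monomial ideal. The inputs are provided by Theorem \ref{thm: grobner} (a squarefree initial ideal $\text{in}_< K_\Q$) and Theorem \ref{thm:CM} (the vertex-decomposable ball or sphere structure of $\Delta_\Q$). The facets of $\Delta_\Q$ are parameterized by subsets $T \subseteq \{1,\ldots,n\}$ that contain no vertex-minimal cycle of $\Q$, via the assignment $T \mapsto \{y_i : i \in T\} \cup \{x_j : j \notin T\}$; two facets share a codimension-one face (ridge) precisely when their indexing subsets differ in one element. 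The KLS criterion then reduces $R_1$ for $\L(\Q)$ to verifying a combinatorial condition at each ridge of $\Delta_\Q$, with the generating binomial relations \eqref{eq: mutationrel}, \eqref{eq: inverserel} and cycle relations \eqref{eq: cyclerel} controlling the local picture at interior ridges and at boundary ridges, respectively.

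The main obstacle is handling the cycle relations. In the acyclic case $\Delta_\Q$ is the boundary of a cross-polytope and the statement reduces to the previously known complete-intersection description, so the genuinely new content concerns cyclic quivers. For these, the boundary ridges of $\Delta_\Q$ are precisely those whose indexing subset would, upon adding a single $y$-vertex, complete a vertex-minimal cycle; one must use the explicit form of the cycle relation \eqref{eq: cyclerel} to show that the localization of $\L(\Q)$ at the height-one prime attached to such a ridge is still a DVR. This is the step where the full strength of the Gr\"obner basis of Theorem \ref{thm: grobner} is essential, and it is the part of the argument I expect to be the most delicate.
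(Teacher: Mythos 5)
Your skeleton is the right one --- Serre's criterion, $S_2$ from Cohen--Macaulayness, and $R_1$ via the Gr\"obner degeneration to the ball $\Delta_\Q$ --- but the proposal stops short of the two ingredients that actually make the Knutson--Lam--Speyer argument run, and it locates the difficulty in the wrong place. The KLS criterion (Proposition \ref{prop:normalityInGeneral}) is not a ridge-by-ridge combinatorial check on $\Delta_\Q$: it requires you first to \emph{name} specific codimension-one subvarieties $Y_1,\dots,Y_s\subseteq X=\mathbb{V}(K_\Q)$, to prove that $X\smallsetminus\bigcup Y_i$ is normal by an independent argument, and to prove that each $\In_< Y_i$ lands on the boundary sphere $\partial\Delta_{K_\Q}$, where the Stanley--Reisner scheme of the ball is generically regular because a boundary ridge lies in only one facet. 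In the paper the $Y_i$ are the components of the $\mathbb{V}(P_j+K_\Q)$, where $\langle x_iy_i-p_i^+-p_i^-\mid 1\le i\le n\rangle=K_\Q\cap P_1\cap\cdots\cap P_r$ is the prime decomposition from \cite{BMRS15}; showing that $\In_<(P_j+K_\Q)$ sits inside $\partial\Delta_{K_\Q}$ uses Knutson's theorem that, for ideals generated from the splitting polynomial $f=\prod_i(x_iy_i-p_i^+-p_i^-)$, taking initial ideals commutes with sums and intersections (Theorem \ref{thm:Knutson}, Proposition \ref{prop:boundarySphere}). None of this appears in your outline, and it cannot be extracted from the Gr\"obner basis of Theorem \ref{thm: grobner} alone.

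Second, the step you flag as most delicate --- showing that the localization at a height-one prime attached to a boundary ridge is a DVR using the cycle relations --- is precisely the step the degeneration gives you for free: the lemma used in the proof of Proposition \ref{prop:normalityInGeneral} (adapted from \cite[Proposition 3.1]{BC03}) transfers generic regularity from $\In_< X$ along a boundary ridge up to $X$ along the corresponding $Y_i$. What the degeneration cannot certify is regularity at height-one primes whose initial degenerations meet the \emph{interior} ridges, since the Stanley--Reisner scheme of a ball is nodal along those. Such primes must be handled geometrically, and this is where the genuine work lies: Proposition \ref{prop:openSetIsNormal} shows that any point $q$ at which the set of vanishing $x$-coordinates contains no directed cycle has a neighborhood whose coordinate ring is the lower bound algebra of an \emph{acyclic} quiver obtained by freezing, hence equals its upper cluster algebra and is normal; and any point where the vanishing $x$-coordinates do contain a cycle is forced, via the leading term of the corresponding cycle polynomial, to lie in $\bigcup Y_i$. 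Without this localization-and-freezing argument your proof has no mechanism for establishing $R_1$ away from the boundary of the ball, so as written the proposal has a genuine gap.
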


Since $\L(\Q)= \mathbb{K}[x_1,...,x_n,y_1,...,y_n]/K_\Q$ is finitely-generated, Serre's Criterion reduces normality to a pair of geometric conditions on the variety $\mathbb{V}(K_\Q)$.
\begin{itemize}
	\item (R1) The variety $\mathbb{V}(K_\Q)$ has no codimension\--$1$ singularities.
	\item (S2) Any regular function on an open subset in $\mathbb{V}(K_\Q)$ with codimension\--$2$ complement extends to a regular function on all of $\mathbb{V}(K_\Q)$.
\end{itemize}
The Cohen-Macaulay property implies the S2 condition, and so normality of $\L(\Q)$ reduces to proving there are no codimension\--$1$ singularities.  

As with Cohen-Macaulayness, this geometric question will be reduced to Stanley-Reisner combinatorics, along  with a result of Knutson-Lam-Speyer \cite[Proposition 8.1]{KLS-Richardson}. See Section \ref{sect:normality} for further information.

%
%

\begin{rem}
Like our other results, Theorem \ref{thm:normalityOfLowerBounds} is only new in the case of non-acyclic $\Q$. In the acyclic setting, $\L(\Q)$ is equal to its upper cluster algebra\footnote{This was proven with an additional assumption in \cite[Thm. 1.18]{BFZ05}, and without said assumption in \cite{MulAU}.},  which is a normal domain by \cite[Prop. 2.1]{MulLA}.
\end{rem}

\subsection*{Structure of paper}

Section \ref{section: algebra} considers relations in $\L(\Q)$ and proves the associated results: Proposition \ref{prop: cyclerels}, Theorem \ref{thm: relations} and Theorem \ref{thm: grobner}.  Section \ref{sect:combinatorics} introduces the relevant combinatorial tools, leading to the proof of Theorem \ref{thm:CM}.  Section \ref{sect:normality} addresses normality, proving Theorem \ref{thm:normalityOfLowerBounds}.  

The paper concludes with a pair of appendices which frame the scope of the paper.  Appendix \ref{section: singularity} considers the singularities of lower bound algebras directly, and provides an example to suggest this is a difficult problem.  Appendix \ref{section: skew} explains how the results of the paper can be extended to \emph{skew-symmetrizable} lower bound algebras, which are more general but also somewhat less intuitive.

\subsection*{Acknowledgements}
This paper is the result of a summer 2015 REU project at the University of Michigan. This project was supported by Karen Smith's NSF grant DMS-1001764. We are also grateful to Sergey Fomin for numerous helpful comments on an early draft of this note.

\section{Presentations and Gr\"obner Bases}\label{section: algebra}
\subsection{Choice graphs and cycle relations}
\noindent
To every cycle of $\Q$ there exists a corresponding relation in $\L(\Q)$. Let $\Q$ have a directed cycle $v_1 \to v_2 \to \cdots \to v_{k-1} \to v_k \to v_1$. By giving an alternate presentation for the product
\begin{equation}\label{Product}
\prod_{i=1}^k x_{v_i}' = \prod_{i=1}^k x_{v_i}\1 (p_{v_i}^+ + p_{v_i}^-),
\end{equation}
we acquire a nontrivial relation that holds in $\L(\Q)$. It is our goal to expand the right-hand product as a sum, and from this, Proposition \ref{prop: cyclerels} will follow.
Each term of the expansion of this product represents a choice, for each $i$, of either $p_{v_i}^+$ or $p_{v_i}^-$ from $x_{v_i}\1(p_{v_i}^+ + p_{v_i}^-)$. Therefore, to each term, we associate a directed graph with $\Z/k\Z$ as its vertex set and $\{(i,i\pm1)\}_{i=1}^n$ as its set of arrows, where the sign of $\pm$ corresponds to the abovementioned choice of $p_{v_i}^+$ (corresponding to the positive sign because $x_{v_{i+1}}$ divides $p_{v_i}^+$) or $p_{v_i}^-$ (negative sign, since $x_{v_{i-1}}$ divides $p_{v_i}^-$). Call these graphs the \textbf{choice graphs} of the terms of the expansion of \eqref{Product}, and let $\mathfrak C$ denote the set of all choice graphs. Formally, we write the correspondence between choice graphs and terms as a function
\begin{equation}
M: \mathfrak C \to \{\text{monomials in }\Z[x_1^{\pm1},\dots,x_k^{\pm1}]\},\quad M(\mathsf g) = \prod_{i=1}^k x_{v_i}\1p_{v_i}^{\mathrm{sign}_{\mathsf g}(i)},
\end{equation}
where
\begin{equation}
\mathrm{sign}_{\mathsf g}(i) = \begin{cases} + & \text{if } (i,i+1) \in \mathsf g,\\ - & \text{if } (i,i-1) \in \mathsf g. \end{cases}.
\end{equation}
\begin{ex}
Let $\Q$ be the quiver on $\Z/6\Z$ whose set of arrows is $\{(j,j+1)\}_{j=1}^6$. The choice graph in Figure \ref{fig: choice graph} represents the term
\[
(x_{1}\1 p_{1}^+)(x_{2}\1 p_{2}^+)(x_{3}\1 p_{3}^-)(x_{4}\1 p_{4}^-)(x_{5}\1 p_{5}^-)(x_{6}\1 p_{6}^-) = x_{1}\1 p_{1}^+ \frac{p_{2}^+ p_{3}^-}{x_3 x_2} p_4^+ \frac{p_5^-}{x_4} \frac{p_6^-}{x_5} x_{6}\1.
\]
\end{ex}

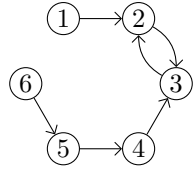
\begin{figure}
\begin{tikzpicture}[scale=1]
	\node[mutable] (1) at (120:1) {$1$};
	\node[mutable] (2) at (60:1) {$2$};
	\node[mutable] (3) at (0:1) {$3$};
	\node[mutable] (4) at (300:1) {$4$};
	\node[mutable] (5) at (240:1) {$5$};
	\node[mutable] (6) at (180:1) {$6$};
	\draw[-angle 90] (2) to [bend left] (3);
	\draw[-angle 90] (1) to (2);
	\draw[-angle 90] (5) to (4);
	\draw[-angle 90] (4) to (3);
	\draw[-angle 90] (6) to (5);
	\draw[-angle 90] (3) to [bend left] (2);
\end{tikzpicture}
\caption{The choice graph for $(x_{1}\1 p_{1}^+)(x_{2}\1 p_{2}^+)(x_{3}\1 p_{3}^-)(x_{4}\1 p_{4}^-)(x_{5}\1 p_{5}^-)(x_{6}\1 p_{6}^-)$}
\label{fig: choice graph}
\end{figure}
By our construction of $\mathfrak C$ and $M$, we have that \eqref{Product} may be written as
\begin{equation}\label{eq: choice sum}
\prod_{i=1}^k x_{v_i}\1 (p_{v_i}^+ + p_{v_i}^-) = \sum_{\mathsf g \in \mathfrak C} M(\mathsf g),
\end{equation}
and so it is our goal to expand the sum on the right of \eqref{eq: choice sum}. The following proposition gives us such an expansion.
\begin{prop}\label{cycle}
We may expand \eqref{eq: choice sum} as follows:
\begin{equation}\label{cyclerel-eq}
\prod_{i=1}^k x_{v_i}\1 (p_{v_i}^+ + p_{v_i}^-) 
= \prod_{i=1}^k \frac{p_{v_i}^+}{x_{v_{i-1}}} + \prod_{i=1}^k \frac{p_{v_i}^-}{x_{v_{i+1}}} + \sum_{\stackrel{\emptyset \neq S\subset \{1,2,...,k\}}{S\cap (S+1)=\emptyset}}(-1)^{|S|+1} \left(\prod_{i\in S} \frac{p_{v_i}^+}{x_{v_{i+1}}}\frac{p_{v_{i+1}}^-}{x_{v_i}}\right) \left(\prod_{i \notin S \cup (S+1)} x_{v_i}'\right)
\end{equation}
\end{prop}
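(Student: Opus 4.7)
The plan is to prove Proposition \ref{cycle} by inclusion-exclusion over the set of choice graphs $\mathfrak{C}$, stratified by where the ``$+$-to-$-$'' collisions occur in the cyclic sign sequence. For each choice graph $\mathsf g$, define the collision set
\[ C(\mathsf g) := \{i \in \ZZ/k\ZZ \st \mathrm{sign}_{\mathsf g}(i) = +\text{ and }\mathrm{sign}_{\mathsf g}(i+1) = -\}, \]
equivalently the set of indices $i$ such that both arrows $(i,i+1)$ and $(i+1,i)$ appear in $\mathsf g$. For $S\subseteq \{1,\ldots,k\}$, set $A_S := \{\mathsf g \in \mathfrak C \st S\subseteq C(\mathsf g)\}$.

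Two observations drive the calculation. First, if $S\cap (S+1)\neq \emptyset$, then $A_S = \emptyset$, since some index would be forced to carry both signs. Second, if $S\cap(S+1) = \emptyset$, the signs at positions in $S\cup (S+1)$ are rigidly determined (each $i\in S$ forces $\mathrm{sign}_{\mathsf g}(i) = +$ and $\mathrm{sign}_{\mathsf g}(i+1) = -$), while signs at the remaining indices are free. Collecting contributions from the fixed and free factors separately gives
\[ \sum_{\mathsf g\in A_S} M(\mathsf g) = \left(\prod_{i\in S}\frac{p_{v_i}^+ p_{v_{i+1}}^-}{x_{v_i}x_{v_{i+1}}}\right)\left(\prod_{i\notin S\cup (S+1)} x_{v_i}'\right). \]
A short inspection also shows that the only choice graphs with $C(\mathsf g) = \emptyset$ are the all-$+$ and all-$-$ graphs (any non-constant cyclic sign pattern must contain at least one $+$-to-$-$ transition), contributing $\prod_{i=1}^k p_{v_i}^+/x_{v_{i-1}}$ and $\prod_{i=1}^k p_{v_i}^-/x_{v_{i+1}}$ respectively, after cyclic reindexing of the $x$-denominators.

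Standard inclusion-exclusion on the events $\{i \in C(\mathsf g)\}$ then yields
\[ \sum_{\mathsf g\,:\,C(\mathsf g)=\emptyset} M(\mathsf g) = \sum_{\stackrel{S\subseteq\{1,\ldots,k\}}{S\cap (S+1)=\emptyset}} (-1)^{|S|}\sum_{\mathsf g\in A_S}M(\mathsf g). \]
Separating the $S=\emptyset$ term, whose inner sum equals $\sum_{\mathsf g\in\mathfrak C} M(\mathsf g) = \prod_i x_{v_i}\1(p_{v_i}^+ + p_{v_i}^-)$ by \eqref{eq: choice sum}, and solving for this total sum gives exactly \eqref{cyclerel-eq}. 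The only real obstacle is bookkeeping: confirming the cyclic reindexing that identifies $\prod_i x_{v_i}\1 p_{v_i}^\pm$ with $\prod_i p_{v_i}^\pm / x_{v_{i\mp 1}}$, and tracking the sign flip $(-1)^{|S|} \to (-1)^{|S|+1}$ that occurs when the non-empty-$S$ terms are moved across the inclusion-exclusion identity. The cyclic structure of the index set is precisely what forces the adjacency condition $S\cap (S+1) = \emptyset$, so no further combinatorial input is required.
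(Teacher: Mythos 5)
Your argument is correct and follows essentially the same route as the paper: your collision sets $C(\mathsf g)$ are exactly the $2$-cycles of the choice graphs, your $A_S$ coincides with the paper's $C(S)$, and the partial sums $\sum_{\mathsf g\in A_S}M(\mathsf g)$ match the paper's key identity. The only difference is that you invoke the closed-form inclusion-exclusion identity in one step (and solve for the total sum), whereas the paper builds the same alternating sum iteratively; this is a presentational rather than substantive difference.
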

\begin{proof}
\indent Note that a choice graph has a directed 2-cycle if and only if its associated term has a factor of the form $(x_{v_i}\1 p_{v_i}^+)(x_{v_{i+1}}\1 p_{v_{i+1}}^-)$, which is a monomial $\frac{p_{v_i}^+}{x_{v_{i+1}}} \frac{p_{v_{i+1}}^-}{x_{v_i}}$ in the variables $x_{v_1},\dots,x_{v_n}$ since $x_{v_{i+1}}$ divides $p_{v_i}^+$ and $x_{v_i}$ divides $p_{v_{i+1}}^-$.
Also notice that no two 2-cycles may share vertices, since each vertex meets the tail of one and only one arrow. It follows that a choice graph may have at most $\lfloor \frac k2 \rfloor$ 2-cycles. Finally notice that a 2-cycle may only exist on adjacent vertices. Now, let $\mathfrak S_j$ denote the collection of subsets $S$ of $\{1,\dots,k\}$ of size $j\geq1$ such that $S \cap (S+1) = \emptyset$, and let $\mathfrak S = \bigcup_{j=1}^{\lfloor \frac k2 \rfloor} \mathfrak S_j$. These subsets $S$ correspond to the ``left endpoints'' of the 2-cycles in choice graphs with $j$ 2-cycles.\\
\indent Let $C(S)$ be the set of all choice graphs with a 2-cycle on every pair $\{i,i+1\}$ for $i \in S \in \mathfrak S$, and let $C_0$ be the set of choice graphs with no 2-cycles, then we have
\begin{equation}\label{eq: separation}
\mathfrak C = C_0 \cup \left(\bigcup_{S \in \mathfrak S_1}C(S)\right) \cup \left(\bigcup_{S \in \mathfrak S_2}C(S)\right) \cup \cdots \cup \left(\bigcup_{S \in \mathfrak S_{\lfloor \frac n2 \rfloor}} C(S)\right).
\end{equation}
Since every $\mathsf g \in C(S)$ has a pair of arrows $(i,i+1), (i+1,i)$ for every $i \in S$, we have
\begin{equation}\label{eq: partial choice}
\sum_{\mathsf g \in C(S)} M(\mathsf g) = \left(\prod_{i\in S} \frac{p_{v_i}^+}{x_{v_{i+1}}}\frac{p_{v_{i+1}}^-}{x_{v_i}}\right) \left(\prod_{i \notin S \cup (S+1)} x_{v_i}\1\left(p_{v_i}^+ + p_{v_i}^-\right)\right).
\end{equation}
We also have 
\[
\sum_{\mathsf g \in C_0} M(\mathsf g) = \prod_{i=1}^k \frac{p_{v_i}^+}{x_{v_{i-1}}} + \prod_{i=1}^k \frac{p_{v_i}^-}{x_{v_{i+1}}},
\]
since there are precisely two choice graphs with no 2-cycles, corresponding to a consistent choice of either $+$ or $-$.
We wish use \eqref{eq: separation} to write \eqref{eq: choice sum} as a sum with summands of the form \eqref{eq: partial choice}. Such a sum must have precisely one term corresponding to each member of $\mathfrak C$. However, there is a certain amount of overcounting involved in \eqref{eq: separation}, since for any $S \in \mathfrak S$, we have
\begin{equation}\label{overcounting}
C(S) \subseteq C(S \smallsetminus \{i\}) \quad \text{for every } i \in S.
\end{equation}
We therefore proceed iteratively by way of the inclusion-exclusion principle. We first include a summand corresponding to $C(S)$ for every $S \in \mathfrak S_1$:
\[
T_1 = \sum_{S \in \mathfrak S_1} \sum_{\mathsf g \in C(S)} M(\mathsf g).
\]
As (\ref{overcounting}) shows, for every $S \in \mathfrak S_2$, the summand $T_1$ contains two terms corresponding to each element of $C(S)$. Therefore, we now exclude a summand for each $C(S)$, $S \in \mathfrak S_2$:
\[
T_2 = T_1 - \sum_{S \in \mathfrak S_2} \sum_{\mathsf g \in C(S)} M(\mathsf g).
\]
Again, (\ref{overcounting}) shows us that, for every $S \in \mathfrak S_3$, the summand $T_2$ excludes one term too many for each element of $C(S)$. We therefore define $T_3$ accordingly, and continue the process of inclusion and exclusion until we obtain
\[
T_{\lfloor \frac k2 \rfloor} = \sum_{j=1}^{\lfloor \frac k2 \rfloor}\left((-1)^{j+1} \sum_{S \in \mathfrak S_j} \sum_{\mathsf g \in C(S)} M(\mathsf g)\right).
\]
Finally, we must include a term of $\sum_{\mathsf g \in C_0} M(\mathsf g)$ corresponding to $C_0$, and we conclude that
\begin{align*}
\sum_{\mathsf g \in \mathfrak C} M(\mathsf g) &= \sum_{\mathsf g \in C_0} M(\mathsf g) + T_{\lfloor \frac k2 \rfloor} \\
\prod_{i=1}^k x_{v_i}\1 (p_{v_i}^+ + p_{v_i}^-) &= \prod_{i=1}^k \frac{p_{v_i}^+}{x_{v_{i-1}}} + \prod_{i=1}^k \frac{p_{v_i}^-}{x_{v_{i+1}}} + \sum_{j=1}^{\lfloor \frac k2 \rfloor}\left((-1)^{j+1} \sum_{S \in \mathfrak S_j} \left(\prod_{i\in S} \frac{p_{v_i}^+}{x_{v_{i+1}}}\frac{p_{v_{i+1}}^-}{x_{v_i}}\right) \left(\prod_{i \notin S \cup (S+1)} \!\!\!x_{v_i}'\right)\right) \\
&= \prod_{i=1}^k \frac{p_{v_i}^+}{x_{v_{i-1}}} + \prod_{i=1}^k \frac{p_{v_i}^-}{x_{v_{i+1}}} + \sum_{\stackrel{\emptyset \neq S\subset \{1,2,...,k\}}{S\cap (S+1)=\emptyset}} \left((-1)^{|S|+1} \left(\prod_{i\in S} \frac{p_{v_i}^+}{x_{v_{i+1}}}\frac{p_{v_{i+1}}^-}{x_{v_i}}\right) \left(\prod_{i \notin S \cup (S+1)} \!\!\!x_{v_i}'\right)\right).
\end{align*}
\end{proof}
Proposition \ref{prop: cyclerels} follows, since we now have
\begin{align*}
\prod_{i=1}^k x_{v_i}\1 (p_{v_i}^+ + p_{v_i}^-) - \sum_{\stackrel{\emptyset \neq S\subset \{1,2,...,k\}}{S\cap (S+1)=\emptyset}} \left((-1)^{|S|+1} \left(\prod_{i\in S} \frac{p_{v_i}^+}{x_{v_{i+1}}}\frac{p_{v_{i+1}}^-}{x_{v_i}}\right) \left(\prod_{i \notin S \cup (S+1)} \!\!\!x_{v_i}'\right)\right)&= \prod_{i=1}^k \frac{p_{v_i}^+}{x_{v_{i-1}}} + \prod_{i=1}^k \frac{p_{v_i}^-}{x_{v_{i+1}}} \\
\sum_{\stackrel{S\subset \{1,2,...,k\}}{S\cap (S+1)=\emptyset}} \left((-1)^{|S|} \left(\prod_{i\in S} \frac{p_{v_i}^+}{x_{v_{i+1}}}\frac{p_{v_{i+1}}^-}{x_{v_i}}\right) \left(\prod_{i \notin S \cup (S+1)} \!\!\!x_{v_i}'\right)\right) &= \prod_{i=1}^k\frac{p_{v_i}^+}{x_{v_{i-1}}}+ \prod_{i=1}^k\frac{p_{v_i}^-}{x_{v_{i+1}}}.
\end{align*}


\subsection{Generators for $K_\Q$}\label{sect:generators}
Now, in addition to the \textbf{defining polynomials} $y_ix_i - (p_i^+ + p_i^-)$, $y_ix_i - 1$ given by the defining relations (\ref{eq: mutationrel}) and (\ref{eq: inverserel}), we have by Proposition \ref{cycle} that the ideal of relations $K_\Q$ also contains the \textbf{cycle polynomials}. We define the cycle polynomials in $\Z[x_1,x_2,\dots,x_n,y_1,y_2,\dots,y_n]$ to be those polynomials coming from vertex-minimal directed cycles whose images under $\pi$ vanish by virtue of (\ref{cyclerel-eq}). That is, for every vertex-minimal  directed cycle of unfrozen vertices $v_1 \to v_2 \to \cdots \to v_k \to v_1$ in $\Q$, we have the cycle polynomial
\[
\sum_{\stackrel{S\subset \{1,2,...,k\}}{S\cap (S+1)=\emptyset}} (-1)^{|S|}\left(\prod_{i\in S} \frac{p_{v_i}^+}{x_{v_{i+1}}}\frac{p_{v_{i+1}}^-}{x_{v_i}}\right)\left(\prod_{i\not\in S\cup (S+1)} y_{v_i}\right) - \prod_{i=1}^k\frac{p_{v_i}^+}{x_{v_{i-1}}}- \prod_{i=1}^k\frac{p_{v_i}^-}{x_{v_{i+1}}}.
\]
Note again that this expression reduces to a polynomial in the $x$- and $y$-variables because each $x_{v_{i+1}}$ divides $p_{v_i}^+$ and each $x_{v_i}$ divides $p_{v_{i+1}}^-$. In Table 1, we present the cycle polynomials given by some basic ice quivers.
\begin{table}
\begin{center}
\caption{Cycle polynomials in several examples}
\begin{tabular}{| >{\centering\arraybackslash}m{2in} | >{\centering\arraybackslash}m{2in} |}
\hline\vspace{.1cm}
\begin{tikzpicture}[scale=1]
	\node[mutable] (1) at (90:1) {$1$};
	\node[mutable] (2) at (330:1) {$2$};
	\node[mutable] (3) at (210:1) {$3$};
	\draw[-angle 90] (1) to (2);
	\draw[-angle 90] (2) to (3);
	\draw[-angle 90] (3) to (1);
\end{tikzpicture} & $y_1y_2y_3 - y_1 - y_2 - y_3 - 2$ \\
\hline\vspace{.1cm}
\begin{tikzpicture}[scale=1]
	\node[mutable] (1) at (135:1) {$1$};
	\node[mutable] (2) at (45:1) {$2$};
	\node[mutable] (3) at (315:1) {$3$};
	\node[mutable] (4) at (225:1) {$4$};
	\draw[-angle 90] (1) to (2);
	\draw[-angle 90] (2) to (3);
	\draw[-angle 90] (3) to (4);
	\draw[-angle 90] (4) to (1);
\end{tikzpicture} & $y_1y_2y_3y_4 - y_1y_2 - y_1y_4 - y_2y_3 - y_3y_4$ \\
\hline\vspace{.1cm}
\begin{tikzpicture}[scale=1]
	\node[mutable] (1) at (90:1) {$1$};
	\node[mutable] (2) at (18:1) {$2$};
	\node[mutable] (3) at (306:1) {$3$};
	\node[mutable] (4) at (234:1) {$4$};
	\node[mutable] (5) at (162:1) {$5$};
	\draw[-angle 90] (1) to (2);
	\draw[-angle 90] (2) to (3);
	\draw[-angle 90] (3) to (4);
	\draw[-angle 90] (4) to (5);
	\draw[-angle 90] (5) to (1);
\end{tikzpicture} & $y_1y_2y_3y_4y_5 - y_1y_2y_3 - y_1y_2y_5 - y_2y_3y_4 - y_3y_4y_5 + y_1 + y_2 + y_3 + y_4 + y_5 - 2$ \\
\hline\vspace{.1cm}
\begin{tikzpicture}[scale=1]
	\node[mutable] (2) {$2$};
	\node[mutable] (1) [above left of=2] {$1$};
	\node[mutable] (4) [above right of=2] {$4$};
	\node[mutable] (3) [below right of=2] {$3$};
	\node[mutable] (5) [below left of=2] {$5$};
	\draw[-angle 90] (1) to (2);
	\draw[-angle 90] (2) to (3);
	\draw[-angle 90] (3) to (4);
	\draw[-angle 90] (4) to (2);
	\draw[-angle 90] (2) to (5);
	\draw[-angle 90] (5) to (1);
\end{tikzpicture} & $y_1y_2y_5 - y_1x_3 - y_2 - y_5x_4 - x_3 - x_4$ and $y_2y_3y_4 - y_2 - y_3x_1 - y_4x_5 - x_1 - x_5$ \\
\hline
\end{tabular}
\end{center}
\label{table: polynomials}
\end{table}
\\ 
\indent We now obtain a presentation for the ideal of relations $K_\Q$ and for the initial ideal $\In_< K_\Q$, where $<$ is a monomial order in which the $y$-variables are much more expensive than the $x$-variables.
This presentation will suffice to prove Theorems \ref{thm: relations} and \ref{thm: grobner}. We first require the following standard lemma.
\begin{lem}\label{basislemma}
Let $J$ and $L$ be ideals in a polynomial ring. Suppose that $J \subseteq L$ and $\In_< J  = \In_< L$. Then $J = L$.
\end{lem}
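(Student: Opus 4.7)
The plan is to argue by contradiction, using the well-ordering property of the monomial order $<$. Assume $J \subsetneq L$, so that the set $L \setminus J$ is nonempty, and consider the collection of leading monomials $\{\In_<(f) : f \in L \setminus J\}$. Since $<$ is a well-ordering on monomials, this collection has a minimum; let $f \in L \setminus J$ be an element realizing this minimum.

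The key step is to use the hypothesis $\In_< J = \In_< L$ to find a way to reduce the leading term of $f$ modulo $J$. Since $\In_<(f)$ is a monomial lying in $\In_< L = \In_< J$, and $\In_< J$ is generated by the leading monomials of elements of $J$, there exists some $g \in J$ whose leading monomial $\In_<(g)$ divides $\In_<(f)$. Write $\In_<(f) = c \cdot m \cdot \In_<(g)$ where $m$ is a monomial and $c$ is an appropriate scalar (so that leading coefficients match), and form
\[
f' := f - c\,m\,g.
\]

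Now I would verify three things about $f'$: first, $f' \in L$ because $f \in L$ and $g \in J \subseteq L$; second, $f' \notin J$, because if $f' \in J$ then $f = f' + cmg \in J$, contradicting the choice of $f$; and third, by construction the leading terms of $f$ and $cmg$ cancel, so either $f' = 0$ or $\In_<(f') < \In_<(f)$. The case $f' = 0$ is impossible since it would give $f = cmg \in J$. Thus $f' \in L \setminus J$ with $\In_<(f') < \In_<(f)$, contradicting the minimality of $\In_<(f)$. Hence $L \setminus J = \emptyset$, i.e.\ $J = L$.

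The argument is entirely routine; there is no real obstacle, since the division-with-remainder step that cancels leading terms is exactly the standard Gröbner basis reduction. The only subtlety worth flagging is the justification that $\In_<(f) \in \In_< J$ implies the existence of a single $g \in J$ with $\In_<(g) \mid \In_<(f)$, which uses the fact that $\In_< J$ is a monomial ideal and monomial ideals contain a monomial if and only if one of their monomial generators divides it.
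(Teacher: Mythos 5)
Your proof is correct and is essentially the paper's argument: the paper simply divides an arbitrary $f\in L$ by a Gr\"obner basis of $J$ and notes the remainder must vanish, while you unroll that division into a minimal-counterexample reduction step using the well-ordering of $<$. The only (shared, harmless) implicit assumption is that leading coefficients can be matched, which holds over a field or when the relevant leading terms are monic, as in the paper's application.
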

\begin{proof}
Let $G$ be a Gr\"obner basis for $J$ and let $f \in L$. Since $\In_< G$ generates $\In_< L$, dividing $f$ by $G$ gives a remainder of $0$, and so $f \in J$.
\end{proof}
\begin{thm}\label{basisthm}
Given an ice quiver $\Q$ on $n$ vertices, the defining polynomials together with the cycle polynomials form a Gr\"obner basis for $K_\Q = \ker \pi$, where
\[ \pi :\ZZ[x_1,x_2,...,x_n,y_1,y_2,...y_n]\longrightarrow \ZZ[x_1^{\pm1},x_2^{\pm1},...,x_n^{\pm1} ]\]
\[ \forall i, \;\;\; \pi(x_i) = x_i,\;\;\; \pi(y_i)=x_i'.\]
\end{thm}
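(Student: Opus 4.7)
The plan is to invoke Lemma~\ref{basislemma}. Let $L$ denote the ideal generated by the defining and cycle polynomials; since each cycle polynomial lies in $K_\Q$ by Proposition~\ref{cycle}, we have $L\subseteq K_\Q$. Let
\[
M = \la x_iy_i \mid 1\leq i\leq n\ra + \la y_{v_1}\cdots y_{v_k} \mid v_1\to \cdots\to v_k\to v_1 \text{ vertex-minimal}\ra
\]
be the monomial ideal generated by the initial terms of the listed generators (these initial terms are as written because the $y$-variables outweigh the $x$-variables in $<$). We have $M\subseteq \In_< L\subseteq \In_< K_\Q$, so by Lemma~\ref{basislemma} it suffices to establish the reverse inclusion $\In_< K_\Q\subseteq M$. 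Equivalently, the \emph{standard monomials} $x^\alpha y^\beta$---those with $\mathrm{supp}(\alpha)\cap\mathrm{supp}(\beta)=\varnothing$ and $\mathrm{supp}(\beta)$ inducing an acyclic subquiver of $\Q$---must map under $\pi$ to a $\ZZ$-linearly independent subset of $\L(\Q)\subseteq \ZZ[x_1^{\pm1},\dots,x_n^{\pm1}]$.

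The device for the linear independence claim is a reduction to the acyclic case of \cite[Corollary~1.17]{BFZ05}. Given any $B\subseteq\{1,\dots,n\}$ for which the induced subquiver on $B$ is acyclic, form the ice quiver $\Q_B$ by freezing every vertex of $\Q$ outside $B$. Freezing does not alter the arrows or the monomials $p_i^\pm$, and a direct check shows $\L(\Q)\subseteq \L(\Q_B)$ inside $\ZZ[x_1^{\pm1},\dots,x_n^{\pm1}]$ (since adjoining $x_i^{-1}$ for $i\notin B$ suffices to express each $x_i'$ inside $\L(\Q_B)$). Because the unfrozen part of $\Q_B$ is acyclic, \cite[Corollary~1.17]{BFZ05} applies to $\Q_B$: the defining polynomials alone form a Gr\"obner basis for $K_{\Q_B}$, and the monomials $x^\alpha y^\beta$ with disjoint $x$- and $y$-supports form a $\ZZ$-basis for $\L(\Q_B)$. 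Any $\ZZ$-linear combination of standard monomials whose $y$-supports collectively lie in some such acyclic $B$ must therefore vanish term-by-term whenever its image vanishes in $\L(\Q)\subseteq \L(\Q_B)$.

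The principal obstacle is that the collective $y$-support $\bigcup_{c_{\alpha,\beta}\neq 0}\mathrm{supp}(\beta)$ of a general standard combination $f=\sum c_{\alpha,\beta}x^\alpha y^\beta$ need not be acyclic in $\Q$---for instance, the three standard monomials $y_1y_2$, $y_1y_3$, $y_2y_3$ in the three-cycle of Figure~\ref{fig: 3cycle} have individually acyclic supports whose union is the full vertex set. To handle such cases, the plan is to associate to each standard monomial a distinguished ``leading Laurent monomial'' of
\[
\pi(x^\alpha y^\beta) = x^{\alpha-\beta}\prod_i(p_i^+ + p_i^-)^{\beta_i},
\]
obtained by extremizing the $x$-exponent vector with respect to a lex term order on $\ZZ[x_1^{\pm1},\dots,x_n^{\pm1}]$ refined by a topological sort of $\mathrm{supp}(\beta)$. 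The acyclicity of $\mathrm{supp}(\beta)$, together with the absence of directed $2$-cycles in $\Q$, should ensure that the extremizing choice of $p_i^+$ versus $p_i^-$ in each factor can be made consistently, producing a well-defined leading monomial whose exponent vector recovers the pair $(\alpha,\beta)$ injectively. Distinct standard pairs will then yield distinct leading Laurent monomials, precluding any nontrivial cancellation in $\L(\Q)$. Rigorously verifying this injectivity is the technical core of the argument.
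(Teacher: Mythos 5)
Your framing is right and matches the paper's strategy up to a point: set $M$ equal to the monomial ideal of initial terms, note $M\subseteq \In_< L\subseteq \In_< K_\Q$, and reduce to the acyclic case of \cite[Corollary 1.17]{BFZ05} by freezing vertices. The inclusion $\L(\Q)\hookrightarrow\L(\Q_B)$ for an acyclic set $B$ is also correct and is used in the paper. But the step you flag as ``the technical core'' --- the injectivity of a leading-Laurent-monomial map on all standard monomials simultaneously --- is a genuine gap, not a routine verification. As written, the order you propose (``lex refined by a topological sort of $\mathrm{supp}(\beta)$'') varies with $\beta$, so it is not a single multiplicative order on $\ZZ[x_1^{\pm1},\dots,x_n^{\pm1}]$, and without a single fixed order the standard ``distinct leading terms imply no cancellation'' argument does not go through. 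Even with a fixed order, showing that the leading monomial of $x^{\alpha-\beta}\prod_i(p_i^++p_i^-)^{\beta_i}$ determines $(\alpha,\beta)$ is delicate (already for the $3$-cycle, $\pi(y_1y_2)$ and $\pi(1)$ share the Laurent monomial $1$), and you give no proof.

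The missing idea that makes the paper's proof work is to argue with a \emph{single} element $f\in K_\Q$ whose initial term $\In_<(f)$ is assumed to lie outside $M$, rather than with a linear combination of standard monomials. A single monomial has a single $y$-support, which (being outside $M$) contains no directed cycle; freezing its complement yields an acyclic $\Q'$. The inclusion $\L(\Q)\hookrightarrow\L(\Q')$ lifts to the substitution $\mu(y_i)=y_i(p_i^++p_i^-)$ for the newly frozen $i$, satisfying $\pi'\circ\mu=\pi$, so $\mu(f)\in K_{\Q'}$. Because $\mu$ fixes every variable occurring in $\In_<(f)$ and only multiplies other terms by cheap $x$-monomials, $\In_<(\mu(f))=\In_<(f)$, and \cite[Corollary 1.17]{BFZ05} applied to $\Q'$ forces $\In_<(f)$ to be divisible by some $x_iy_i$, a contradiction. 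This sidesteps entirely the problem that the collective $y$-support of a standard combination need not be acyclic. To repair your write-up, either adopt this single-element argument or actually construct a fixed term order and prove the injectivity you assert; the latter is substantially harder than the former.
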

\begin{proof}
Let $J$ be the ideal of $\Z[x_1,x_2,\dots,x_n,y_1,y_2,\dots,y_n]$ that is generated by the set $G$ of defining and cycle polynomials. We know that $J \subseteq K_\Q$, and therefore that $\In_< J \subseteq \In_< K_\Q$. Let $M$ be the monomial ideal generated by the initial terms of the polynomials in $G$.  We know that $M \subseteq \In_< J \subseteq \In_< K_\Q$, so we would like to show that $\In_< K_\Q \subseteq M$. 

Assume (for the purpose of contradiction) that there is some $f\in K_\Q$ such that $\In_<(f)\not\in M$.  We may write
(assume all $a$, $b$ and $\lambda$ non-zero for simplicity)
\begin{equation}\label{form}
\In_<(f)= \lambda x_{i_1}^{a_{i_1}} x_{i_2}^{a_{i_2}} \cdots x_{i_k}^{a_{i_k}} y_{j_1}^{b_{j_1}} y_{j_2}^{b_{j_2}} \cdots y_{j_\ell}^{b_{j_\ell}}
\end{equation}
Note that $\{j_1,j_2,...,j_\ell\}$ cannot contain the indices of a directed cycle of unfrozen vertices. Otherwise, it would also contain the indices of a vertex-minimal directed cycle, and $\In_<(f)$ would be a multiple of the initial term of a cycle polynomial, contradicting the assumption that $\In_<(f)\not\in M$.
%

Let $Y\subset [n]$ be the indices of unfrozen vertices which are not in $\{j_1,j_2,...,j_\ell\}$, and let $\Q'$ be the ice quiver obtained by freezing the vertices in $\Q$ indexed by $Y$.  By the preceding observation, $\Q'$ is an acyclic quiver.
%
%
There is a natural inclusion
\[ \L(\Q)\hookrightarrow \L(\Q') \]
induced by inclusions into $\ZZ[x_1^{\pm1},...,x_n^{\pm1}]$.  This inclusion may be lifted to a ring homomorphism
\[ \mu: \ZZ[x_1,...,x_n,y_1,...,y_n]\rightarrow \ZZ[x_1,...,x_n,y_1,...,y_n]\]
\[ \mu(x_i)=x_i,\;\;\; \mu(y_i) = \left\{\begin{array}{cc}
y_i(p_i^++p_i^-) & \text{if }i\in Y \\
y_i & \text{otherwise}
\end{array}\right\}\]
with the property that $\pi'\circ \mu = \pi$, where $\pi'$ is the map
\[ \ZZ[x_1,x_2,...,x_n,y_1,y_2,...y_n]\longrightarrow \ZZ[x_1^{\pm1},x_2^{\pm1},...,x_n^{\pm1} ]\]
defined by $\Q'$ instead of $\Q$.  


Each of the variables appearing in the initial term of $f$ are fixed by $\mu$.  In lower-order terms of $f$, $\mu$ may introduce monomials in $x$; however, this will never create a term greater than $\In_<(f)$.  Hence, 
\[ \In_<(f) = \In_<(\mu(f)) \in \In_<(K_{\Q'})\]

Since $\Q'$ is acyclic, it was shown in \cite[Corollary 1.17]{BFZ05} that $\In_<(K_{\Q'})$ is generated by $\{x_iy_i \mid i\in [n]\}$.  Hence, $\In_<(f)$ is a multiple of $x_iy_i$ for some $i$.  However, this implies that $\In_<(f)$ is a multiple of the initial term of the $i$th defining polynomial in $K_\Q$, contradicting the assumption that $\In_<(f)\not\in M$.

It follows that $\In_<(K_\Q)\subset M$.  This consequently implies that $\In_<(J)=\In_<(K_\Q)$ and, by the preceding lemma, that $J=K_\Q$.  Furthermore, since $\In_<(G)$ generates $\In_<(K_\Q)$, the set $G$ is a Gr\"obner basis for $K_\Q$.
\end{proof}

\section{Simplicial Complexes and Cohen-Macaulayness}\label{sect:combinatorics}

\noindent
Now that we have obtained a generating set for $\In_< K_{\Q}$, we can explicitly construct the Stanley-Reisner complex of $\In_< K_{\Q}$. We first consider a larger class of simplicial complexes, defined as follows:
\begin{defn}
Let $S = \{1,\dots,n\}$, let $\sC$ be a collection $\{C_1,\dots,C_k\}$ of subsets of $S$, and let $Y \subseteq S$.  Define the simplicial complex $\Delta(S,\sC,Y)$ on the set $\{x_i \st i \in S\} \cup \{y_i \st i \in Y\}$ by the rule\footnote{If a subset $C_j$ is not contained in $Y$, we may simply ignore the condition $\{y_i\}_{i\in C_j} \not\subseteq S$, which is vacuously true because $y_i$ is not defined for $i\not\in Y$.}
\[
F \in \Delta(S,\sC,Y) \iff \forall i \, \{x_i,y_i\} \not \subseteq F \text{ and } \forall j \, \{y_i\}_{i \in C_j} \not \subseteq F.
\]
\end{defn}
Since every facet of $\Delta(S,\sC,Y)$ is of the form $\{z_1,\dots,z_n\}$, where $z_i$ is either $x_i$ or $y_i$, we see that $\Delta(S,\sC,Y)$ is always a pure simplicial complex. Note that for any quiver $\Q$ on vertex set $S$, where $\sC$ is the collection of sets of vertices of vertex-minimal directed cycles on $\Q$, we have by Theorem \ref{basisthm} that
\begin{equation}\label{idealform}
\In_< K_{\mathsf Q} = \left \la x_1y_1,\dots,x_ny_n, \prod_{i \in C_1}y_i,\dots,\prod_{i \in C_k}y_i \right \ra, \quad C_j \subseteq S,
\end{equation}
and the Stanley-Reisner complex of $\In_< K_{\mathsf Q}$ is precisely $\Delta(S,\sC,S)$.  

\begin{rem}
Whenever $\{i\}\in \sC$, there is no vertex of the form $y_i$ in the simplicial complex $\Delta(S,\sC,Y)$.  Such confusing notation is necessary for later induction.  A vertex in $\Delta(S,\sC,Y)$ of the form $y_i$ will be called a \textbf{$y$-vertex}.
\end{rem}

We now recall some definitions.
Given a simplicial complex $\Delta$ and a vertex $v$ of $\Delta$, the \textbf{link} of $v$ is the set
\[\link_\Delta(v) \coloneqq \{F \in \Delta \st F \not\ni v \text{ and } F \cup \{v\} \in \Delta\},
\]
and the \textbf{deletion} of $v$ is the set
\[
\del_{\Delta}(v) \coloneqq \overline{\{F \in \Delta \st F \cup \{v\} \notin \Delta\}},
\]
where the bar denotes closure, so that $\del_{\Delta}(v)$ is a simplicial complex. 
We call a vertex $v$ of a simplicial complex $\Delta$ a \textbf{shedding vertex} of $\Delta$ if no face of $\link_\Delta(v)$ is a facet of $\del_\Delta(v)$. Finally, we recall the (recursive) notion of vertex-decomposability: a simplicial complex $\Delta$ is \textbf{vertex-decomposable} if it is a simplex, or if it has a shedding vertex $v$ such that both $\link_\Delta(v)$ and $\del_\Delta(v)$ are vertex-decomposable (see \cite{BilleraProvan}, also \cite{BjornerWachs}). 

It is our goal to prove the following theorem, from which Theorem \ref{thm:CM} will follow.

\begin{thm}\label{homeo}
The complex $\Delta(S,\sC,Y)$ is always homeomorphic to a vertex-decomposable $(n-1)$-ball, except when $\sC = \emptyset$ and $Y=S$, in which case $\Delta(S,\sC,Y)$ is homeomorphic to a vertex-decomposable $(n-1)$-sphere.
\end{thm}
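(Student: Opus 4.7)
The plan is to proceed by strong induction on $|S|+|Y|$, with base case $|S|=0$ where $\Delta = \{\emptyset\}$ is the $(-1)$-sphere. First I would note a cosmetic reduction: if $\{i\} \in \sC$ with $i \in Y$, then $y_i$ is not a vertex of $\Delta$, and $\Delta(S,\sC,Y) = \Delta(S,\sC\setminus\{\{i\}\},Y\setminus\{i\})$. So we may assume $\sC$ contains no singleton subset of $Y$, and then split into three cases.

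In Case 1 ($\sC = \emptyset$ and $Y = S$), I would take $y_1$ as a shedding vertex and compute $\link_\Delta(y_1) = \Delta(S\setminus\{1\},\emptyset,S\setminus\{1\})$ and $\del_\Delta(y_1) = x_1 \ast \link_\Delta(y_1)$. By induction the link is a vertex-decomposable $(n-2)$-sphere and the deletion is an $(n-1)$-ball with boundary equal to the link; gluing $\del_\Delta(y_1)$ and $y_1 \ast \link_\Delta(y_1)$ along their common boundary sphere yields an $(n-1)$-sphere.

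In Case 2 ($Y \subsetneq S$), for any $i \in S \setminus Y$, cycles in $\sC$ containing $i$ impose vacuous constraints, so after removing them, $x_i$ becomes a cone vertex and $\Delta = x_i \ast \Delta(S \setminus \{i\}, \sC^*, Y)$; since a cone over a vertex-decomposable ball or sphere is a vertex-decomposable ball (using any shedding vertex of the base, which remains shedding in the cone), this case is done. In Case 3 ($Y = S$ and $\sC \neq \emptyset$), I would choose any cycle $C \in \sC$ and any $j \in C$, then take $y_j$ as a shedding vertex; the shedding condition holds because $\dim \del_\Delta(y_j) = n-1$ while every face of $\link_\Delta(y_j)$ has size at most $n-1$, so no face of the link can be a facet of the deletion. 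The deletion $\del_\Delta(y_j) = \Delta(S,\sC,S\setminus\{j\})$ falls under Case 2 and is a ball, while the link equals $\Delta(S \setminus \{j\}, \sC^{\mathrm{link}}, S \setminus \{j\})$, where $\sC^{\mathrm{link}}$ contains the non-empty set $C \setminus \{j\}$, so by induction the link is also a ball.

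The main obstacle is then to show that $\Delta = \del_\Delta(y_j) \cup (y_j \ast \link_\Delta(y_j))$ is itself a ball, which I would handle via the standard PL gluing lemma: two PL balls glued along a common PL ball sitting in each of their boundaries form a PL ball. Writing $\del_\Delta(y_j) = x_j \ast K$, where $K = \Delta(S \setminus \{j\}, \sC^*, S \setminus \{j\})$ with $\sC^* = \{C' \in \sC : j \notin C'\}$, we have $K \subseteq \partial \del_\Delta(y_j)$ directly from the cone structure. Since the constraint set $\sC^{\mathrm{link}}$ contains $\sC^*$, $\link_\Delta(y_j)$ is a subcomplex of $K$, hence sits inside $\partial \del_\Delta(y_j)$; it is also tautologically in $\partial(y_j \ast \link_\Delta(y_j))$. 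The gluing lemma then yields that $\Delta$ is a vertex-decomposable $(n-1)$-ball, completing Case 3. The careful bookkeeping of $\sC$, $\sC^*$, and $\sC^{\mathrm{link}}$ under the operations of link and deletion, together with verifying the containment $\link_\Delta(y_j) \subseteq \partial \del_\Delta(y_j)$, is the delicate step that makes the PL gluing lemma applicable.
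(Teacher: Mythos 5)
Your proposal is correct and follows essentially the same route as the paper: induct, shed a $y$-vertex, identify the link and the deletion as smaller complexes of the same family (the paper's Propositions \ref{link} and \ref{del}), verify that the link lies in the boundary of the deletion, and conclude via the ball-gluing lemma. The only cosmetic differences are that the paper establishes the boundary containment by a direct combinatorial description of the boundary facets of the deletion (Lemma \ref{boundary}) rather than via the cone structure $x_j \ast K$, and it treats the sphere case $\Delta(S,\emptyset,S)$ in one step as the boundary complex of a cross-polytope rather than by a separate inductive suspension argument.
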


Note that the case where $\sC = \emptyset$ and $Y=S$ is precisely the case in which $\{y_j\}_{j \in S}$ is a face of $\Delta(S,\sC,Y)$. We first characterize the link and the deletion in $\Delta(S,\sC,Y)$ for any vertex of the form $y_i$ for $i\in Y$.

\begin{prop}\label{link}
For $y$-vertex $y_i$ in $\Delta(S,\sC,Y)$, we have $\link_{\Delta(S,\sC,Y)}(y_i) = \Delta(S^i,\sC^i,Y^i)$, where $S^i \coloneqq S \smallsetminus \{i\}$, $\sC^i \coloneqq \{C_j \cap S_i \st C_j \in \sC\}$, and $Y^i \coloneqq Y \cap S^i$.
\end{prop}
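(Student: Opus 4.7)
The plan is to unwind the definition of $\link_{\Delta(S,\sC,Y)}(y_i)$ and observe that each defining condition for a face of $\link_{\Delta(S,\sC,Y)}(y_i)$ translates directly into the defining condition for a face of $\Delta(S^i,\sC^i,Y^i)$. There is no real obstacle here, only bookkeeping: the result is essentially a tautology once the effect of adjoining $y_i$ to an arbitrary subset is tracked carefully.

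First I would fix a subset $F$ of the vertex set of $\Delta(S,\sC,Y)$ that does not contain $y_i$, and rewrite the condition $F\cup\{y_i\}\in\Delta(S,\sC,Y)$ as a conjunction of two families of conditions. The first family, that no pair $\{x_j,y_j\}$ is contained in $F\cup\{y_i\}$, splits into two cases: for $j\ne i$ it is exactly the condition that $\{x_j,y_j\}\not\subseteq F$, while for $j=i$ it forces $x_i\notin F$. Consequently $F$ must be a subset of $\{x_j\st j\in S^i\}\cup\{y_j\st j\in Y^i\}$, which is precisely the vertex set of $\Delta(S^i,\sC^i,Y^i)$.

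Next I would handle the cycle conditions. The requirement that $\{y_j\}_{j\in C_\ell}\not\subseteq F\cup\{y_i\}$ becomes, in the case $i\notin C_\ell$, the statement $\{y_j\}_{j\in C_\ell}\not\subseteq F$; and in the case $i\in C_\ell$, since $y_i$ is automatically present, it becomes $\{y_j\}_{j\in C_\ell\smallsetminus\{i\}}\not\subseteq F$. In both cases the condition is exactly $\{y_j\}_{j\in C_\ell\cap S^i}\not\subseteq F$, which is the cycle condition indexed by $C_\ell\cap S^i\in\sC^i$.

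Finally I would combine these observations: $F\in\link_{\Delta(S,\sC,Y)}(y_i)$ if and only if $F$ is a subset of $\{x_j\st j\in S^i\}\cup\{y_j\st j\in Y^i\}$ satisfying both the no-$\{x_j,y_j\}$-pair condition for each $j\in S^i$ and the cycle condition for each $C_\ell\cap S^i\in\sC^i$. This is exactly the defining condition for membership in $\Delta(S^i,\sC^i,Y^i)$, completing the identification. I would also briefly note that when $\{i\}\in \sC$ (so that $y_i$ is not a vertex of $\Delta(S,\sC,Y)$ in the first place) the statement is vacuous, and that when $i\notin Y$ the hypothesis that $y_i$ is a $y$-vertex is not met, so the only substantive case is $i\in Y$ and $\{i\}\notin\sC$.
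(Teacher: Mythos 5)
Your proposal is correct and follows essentially the same route as the paper's proof: both arguments are pure definition-unwinding, tracking how the pair conditions $\{x_j,y_j\}\not\subseteq F\cup\{y_i\}$ and the cycle conditions $\{y_j\}_{j\in C_\ell}\not\subseteq F\cup\{y_i\}$ translate into the defining conditions of $\Delta(S^i,\sC^i,Y^i)$. The only cosmetic difference is that you phrase it as a single chain of equivalences while the paper verifies the two inclusions separately; your closing remark about the cases $\{i\}\in\sC$ and $i\notin Y$ is a correct and harmless addition.
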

\begin{proof}
We first show $\Link \coloneqq \link_{\Delta(S,\sC,Y)}(y_i) \subseteq \Delta(S^i,\sC^i,Y^i)$. Since $\Link$ is a subcomplex of $\Delta(S,\sC,Y)$, no face of $\Link$ contains $\{x_j,y_j\}$ for any $j$, since $\Delta(S,\sC,Y)$ is defined so as never to contain any such face. Since no face of $\Link$ may contain either $x_i$ or $y_i$, we see that $\Link$ is a simplicial complex on $\{x_j \st j \in S^i\} \cup \{y_j \st j \in Y^i\}$. Finally, were some $F \in \Link$ to contain  $\{y_j\}_{j \in C_\ell^i}$ for some $\ell$, then $F \cup \{y_i\}$ would contain $\{y_j\}_{j \in C_\ell}$, contradicting $F \cup \{y_i\} \in \Delta(S,\sC,Y)$. We now have that $\Link \subseteq \Delta(S^i,\sC^i,Y^i)$.\\
\indent We now show $\Delta(S^i,\sC^i,Y^i) \subseteq \Link$. Consider some $F \in \Delta(S^i,\sC^i,Y^i)$. Clearly $F \not\ni y_i$, so suppose that $F \cup \{y_i\} \notin \Delta(S,\sC,Y)$. Then either $\{x_i,y_i\} \subseteq F \cup \{y_i\}$ or $\{y_j\}_{j \in C_\ell} \subseteq F \cup \{y_i\}$ for some $\ell$. The former case is impossible since $x_i \notin S^i$. The latter case implies $\{y_j\}_{j \in C_\ell^i} \subseteq F$, contradicting the definition of $\Delta(S^i,\sC^i,Y^i)$. Therefore we must have $F \cup \{y_i\} \in \Delta(S,\sC,Y)$ for every face $F$ of $\Delta(S^i,\sC^i,Y^i)$, from which it follows that $\Delta(S^i,\sC^i,Y^i) \subseteq \Link$. We conclude that $\Delta(S^i,\sC^i,Y^i) = \Link$.
\end{proof}

\begin{prop}\label{del}
For $y$-vertex $y_i$ in $\Delta(S,\sC,Y)$, we have $\del_{\Delta(S,\sC,Y)}(y_i) = \Delta(S,\sC,Y^i)$, where $Y^i$ is defined as above.
\end{prop}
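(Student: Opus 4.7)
The plan is to prove the two inclusions of simplicial complexes separately, mirroring the structure used in the proof of Proposition \ref{link}. Let $D := \{F \in \Delta(S,\sC,Y) : F \cup \{y_i\} \notin \Delta(S,\sC,Y)\}$, so that $\del_{\Delta(S,\sC,Y)}(y_i) = \overline{D}$ by definition. Since $F \cup \{y_i\} = F$ whenever $y_i \in F$, every element of $D$ must omit $y_i$; by downward closure, no face of $\overline{D}$ contains $y_i$ either.

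For the inclusion $\overline{D} \subseteq \Delta(S,\sC,Y^i)$, I would take $F' \in \overline{D}$, so that $F' \subseteq F$ for some $F \in D$. By the preceding observation, $F'$ lies in the vertex set $\{x_j : j \in S\} \cup \{y_j : j \in Y^i\}$ of $\Delta(S,\sC,Y^i)$, and since $F \in \Delta(S,\sC,Y)$ satisfies the two forbidden-set conditions ($\{x_j,y_j\} \not\subseteq F$ and $\{y_j\}_{j \in C_\ell} \not\subseteq F$), so does any subset $F'$, placing $F' \in \Delta(S,\sC,Y^i)$.

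For the reverse inclusion, given $F' \in \Delta(S,\sC,Y^i)$ I would extend it to a facet $G$ of $\Delta(S,\sC,Y^i)$. By the purity remark made immediately after the definition of these complexes (applied now to $\Delta(S,\sC,Y^i)$), $G$ has the form $\{z_j : j \in S\}$ with each $z_j \in \{x_j, y_j\}$; since $y_i$ is not a vertex of $\Delta(S,\sC,Y^i)$, one is forced to have $x_i \in G$. A direct check shows $G \in \Delta(S,\sC,Y)$: any forbidden set $\{y_j\}_{j \in C_\ell}$ with $i \in C_\ell$ is automatically avoided because $y_i \notin G$, and the other forbidden-set conditions carry over verbatim. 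Finally, $G \cup \{y_i\}$ contains $\{x_i, y_i\}$, so $G \cup \{y_i\} \notin \Delta(S,\sC,Y)$; hence $G \in D$ and $F' \in \overline{D}$. I do not anticipate any real obstacle here---the argument is essentially bookkeeping with the two defining conditions---and the one step that requires any care is the forcing of $x_i \in G$ in the reverse direction, which is exactly where purity of $\Delta(S,\sC,Y^i)$ enters.
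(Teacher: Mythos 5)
Your argument is correct and is essentially the paper's own proof: one inclusion by noting that faces of the deletion omit $y_i$ and inherit the forbidden-set conditions, and the reverse inclusion by using purity to force $x_i$ into every facet of $\Delta(S,\sC,Y^i)$, so that adjoining $y_i$ creates the forbidden pair $\{x_i,y_i\}$. No gaps; the only difference is that you pass explicitly through the closure $\overline{D}$, which the paper leaves implicit.
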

\begin{proof}
We first show $\D \coloneqq \del_{\Delta(S,\sC,Y)}(y_i) \subseteq \Delta(S,\sC,Y^i)$. Since no face of $\D$ may contain $y_i$, we see that $\D$ is a simplicial complex on $\{x_j \st j \in S\} \cup \{y_j \st j \in Y^i\}$. Since $\D$ is a subcomplex of $\Delta(S,\sC,Y)$, no face of $\D$ contains either $\{x_j,y_j\}$ for any $j$ or $\{y_j\}_{j \in C_\ell}$ for any $\ell$. Therefore $\D \subseteq \Delta(S,\sC,Y^i)$.\\
\indent Since $\Delta(S,\sC,Y^i)$ has $x_i$ as a vertex but not $y_i$, we have by the definition of $\Delta(S,\sC,Y^i)$ that every facet of $\Delta(S,\sC,Y^i)$ contains $x_i$. Consider some arbitrary facet $F$ of $\Delta(S,\sC,Y^i)$. Since $x_i \in F$, we cannot have $F \cup \{y_i\} \in \Delta(S,\sC,Y)$, and so $F \in \D$. Therefore $\D \subseteq \Delta(S,\sC,Y^i)$, and so we conclude that $\D = \Delta(S,\sC,Y^i)$.
\end{proof}
We may now observe an important relationship between links and deletions that arises in our case. The following result shows that any vertex $y_i$ is a shedding vertex. Note that in the case where $\sC = \emptyset$ and $Y=S$, every vertex $y_i$ must be a shedding vertex because its link is always empty.

\begin{lem}\label{boundary}
Except in the case where $\sC = \emptyset$ and $Y=S$, the complex $\Delta(S^i,\sC^i,Y^i)$ is properly contained in the boundary complex $\dee \Delta(S,\sC,Y^i)$.
\end{lem}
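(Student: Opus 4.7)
My plan is to establish the containment $\Delta(S^i,\sC^i,Y^i)\subseteq \partial\Delta(S,\sC,Y^i)$ first, and then produce a witnessing face for strictness by a short case analysis on $\sC$ and $Y$.

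\emph{Containment.} I would take any facet $F$ of $\Delta(S^i,\sC^i,Y^i)$. By purity of the link (recorded after Definition 3.1), $|F|=n-1$ and $F$ contains exactly one element of each pair $\{x_k,y_k\}$ for $k\in S^i$; in particular $F$ is a face of $\Delta(S,\sC,Y^i)$. The only way to enlarge $F$ to a facet there is to adjoin $x_i$, since every index in $S^i$ is already filled and $y_i$ is not a vertex of $\Delta(S,\sC,Y^i)$. Hence $F$ lies in the unique facet $F\cup\{x_i\}$, placing $F\in \partial\Delta(S,\sC,Y^i)$. The claimed containment then follows because the boundary is a subcomplex.

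\emph{Strictness.} My main strategy is to exhibit a face of $\partial\Delta(S,\sC,Y^i)$ that contains $x_i$, since such a face cannot live in $\Delta(S^i,\sC^i,Y^i)$ (whose vertex set omits $x_i$); in the one configuration where this strategy breaks down I will compare complexes directly. If $Y\neq S$, pick any $j\in S\setminus Y$: one has $j\neq i$ (as $i\in Y$), and $y_j$ is absent from $\Delta(S,\sC,Y^i)$, so the codim-$1$ face $\{x_k:k\neq j\}$ has $x_j$ as its only facet-extension and therefore lies in the boundary while containing $x_i$. The identical argument covers the case $Y=S$ but $\sC$ contains a singleton $\{k\}$ with $k\neq i$. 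Assuming $Y=S$ and $\sC$ has no singletons, if some $C^*\in\sC$ avoids $i$, I take $C^*$ minimal under inclusion in $\sC$ and any $j\in C^*$, and form the ``shifted'' facet
\[
F_1\coloneqq \{y_k:k\in C^*\setminus\{j\}\}\cup \{x_k:k\notin C^*\setminus\{j\}\}.
\]
Minimality of $C^*$ prevents any other cycle of $\sC$ from lying inside $C^*\setminus\{j\}$, so $F_1$ really is a facet; then $F_1\setminus\{x_j\}$ is a codim-$1$ boundary face containing $x_i$, because its only competing extension $y_j$ would complete the forbidden set $\{y_k\}_{k\in C^*}$.

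The only residual configuration is $Y=S$ with every $C\in\sC$ meeting $i$ (necessarily $|C|\geq 2$, since singletons are excluded). Each cycle condition then requires the deleted vertex $y_i$, so these conditions become vacuous in $\Delta(S,\sC,Y^i)$; thus $\Delta(S,\sC,Y^i)=\Delta(S,\emptyset,S^i)$ is the cone with apex $x_i$ over $\Delta(S^i,\emptyset,S^i)$. The base is the boundary complex of a cross-polytope, hence an $(n-2)$-sphere with empty boundary, and therefore $\partial\Delta(S,\sC,Y^i)=\Delta(S^i,\emptyset,S^i)$. The link $\Delta(S^i,\sC^i,S^i)$ sits properly inside this, since $\sC^i$ retains the nonempty sets $\{C\setminus\{i\}:C\in\sC\}$ as nontrivial cycle conditions. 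The main obstacle I anticipate is keeping the case analysis coherent; the delicate step is the minimality choice of $C^*$, without which the shifted facet $F_1$ could be derailed by some other cycle of $\sC$ landing inside $C^*\setminus\{j\}$.
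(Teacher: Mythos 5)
Your proof is correct and follows essentially the same route as the paper's: the containment uses the same characterization of boundary faces as codimension-one faces admitting a unique facet extension, and your strictness witness built from an inclusion-minimal cycle avoiding $i$ (shifting one $y_j$ to $x_j$) is exactly the paper's construction. The only difference is organizational --- you split the strictness argument by cases on $(Y,\sC)$ and handle the ``every cycle contains $i$'' configuration with a cone-over-cross-polytope computation, where the paper simply observes that $\{y_j\}_{j \ne i}$ is itself a boundary face not lying in the link.
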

\begin{proof}
Since $\Delta(S^i,\sC^i,Y^i) \subseteq \Delta(S,\sC,Y^i)$ and $\Delta(S,\sC,Y^i)$ is pure, it follows that every facet of $\Delta(S^i,\sC^i,Y^i)$ meets at least one facet of $\Delta(S,\sC,Y^i)$. Now we show that each facet of $\Delta(S^i,\sC^i,Y^i)$ meets only one facet of $\Delta(S,\sC,Y^i)$. Observe that the facets of $\dee \Delta(S,\sC,Y^i)$ are characterized as the codimension 1 faces $\{z_j\}_{j \ne k}$, $k \in \{1,\dots,n\}$, where each $z_j$ is either $x_j$ or $y_j$, such that exactly one of either $\{z_j\}_{j \ne k} \cup \{x_k\}$ or $\{z_j\}_{j \ne k} \cup \{y_k\}$ lies in $\Delta(S,\sC,Y^i)$. Every facet of $\Delta(S^i,\sC^i,Y^i)$ is of the form $\{z_j\}_{j \ne i}$, and it always happens that $\{z_j\}_{j \ne i} \cup \{x_i\} \in \Delta(S,\sC,Y^i)$ and $\{z_j\}_{j \ne i} \cup \{y_i\} \notin \Delta(S,\sC,Y^i)$. Therefore $\Delta(S^i,\sC^i,Y^i) \subseteq \dee \Delta(S,\sC,Y^i)$.\\
\indent We now must show that this containment is proper. We have two cases. Either $\{y_j\}_{j \ne i}$ is a face of $\Delta(S,\sC,Y^i)$ or it is not. If it is, then it must lie on $\dee \Delta(S,\sC,Y^i)$, because $\{y_j\}_{j \ne i} \cup \{x_i\}$ is a face of $\Delta(S,\sC,Y^i)$, while $\{y_j\}_{j \ne i} \cup \{y_i\}$ is not a face of $\Delta(S,\sC,Y^i)$ since either $\sC \ne \emptyset$ or $Y \ne S$. If $\{y_j\}_{j \ne i}$ is not a face of $\Delta(S,\sC,Y^i)$, then there must be some $C \in \sC$ not containing $i$ such that no other member of $\sC$ is a subset of $C$. Then, for any $k \in C$, we have that $F = \{x_j\}_{j \notin C} \cup \{y_j\}_{j \in C\smallsetminus\{k\}}$ is a face of $\Delta(S,\sC,Y^i)$. This face $F$ lies on $\dee \Delta(S,\sC,Y^i)$, since $F \cup \{x_k\} \in \Delta(S,\sC,Y^i)$ but $F \cup \{y_k\} \notin \Delta(S,\sC,Y^i)$. Since both $\{y_j\}_{j \ne i}$ and $F$ contain $x_i$, neither is a face of $\Delta(S^i,\sC^i,Y^i)$. Therefore there is always an element of $\dee \Delta(S,\sC,Y^i)$ that is not in $\Delta(S^i,\sC^i,Y^i)$, and so the containment $\Delta(S^i,\sC^i,Y^i) \subseteq \dee \Delta(S,\sC,Y^i)$ is proper.
\end{proof}
By the previous lemma and the remarks above, we see that every $\Delta(S,\sC,Y)$ is vertex-decomposable, because the $y$-vertices are always shedding vertices, and any complex without $y$-vertices is a simplex.  The remainder of the proof is to strengthen this argument to prove that these simplicial complexes are balls or spheres, as appropriate.


\begin{proof}[Proof of Theorem \ref{homeo}]
First, we prove that $\Delta(S,\sC,Y)$ is a vertex-decomposable $(n-1)$-ball when $Y\neq S$ or $\sC\neq \emptyset$, by induction on the number of $y$-vertices.

If there are no $y$-vertices in $\Delta(S,\sC,Y)$ (that is, $\{i\}\in \sC$ for all $i\in Y$), then $\Delta(S,\sC,Y)$ is just one simplex on $n$ vertices, which is homeomorphic to an $(n-1)$-ball. Assume the inductive hypothesis holds whenever there are fewer than $m$ $y$-vertices, and assume that $\Delta(S,\sC,Y)$ has $m$-many $y$-vertices.  Choose a vertex $y_i$ in $\Delta(S,\sC,Y)$, and define 
\[\Link \coloneqq \link_{\Delta(S,\sC,Y)}(y_i) = \Delta(S^i,\sC^i,Y^i)\text{ and } \D \coloneqq \del_{\Delta(S,\sC,Y)}(y_i) = \Delta(S,\sC,Y^i)\]
We observe that both $\Link$ and $\D$ satisfy the inductive hypothesis; this is clear when $Y\neq S$.  If $Y=S$, then by assumption $\sC\neq\emptyset$.  Since $y_i$ is a vertex of $\Delta(S,\sC,Y)$, we also know that $\{i\}\not\in \sC$.  It follows that $\sC^i\neq\emptyset$, and so $\Link$ still satisfies the inductive hypothesis.  Therefore, $\Link$ is a vertex-decomposable $(n-2)$-ball and $\D$ is a vertex-decomposable $(n-1)$-ball.


%
%

As a consequence, the cone $\Cone$ from $y_i$ on $\link_{\Delta(S,\sC,Y)}(y_i)$ is a vertex-decomposable $(n-1)$-ball. By Lemma \ref{boundary}, $\Cone$ and $\D$ meet at the proper subset $\Link$ of $\dee \D$, which is a vertex-decomposable $(n-2)$-ball. Therefore, $\Delta(S,\sC,Y) = \Cone \cup \D$ is a vertex-decomposable $(n-1)$-ball, completing the induction.

The remaining case is $\Delta(S,\emptyset,S)$. Consider the mapping $\{x_1,\dots,x_n,y_1,\dots,y_n\} \to \R^n$ given by $x_i \mapsto e_i$ and $y_i \mapsto -e_i$, where $\{e_1,\dots,e_n\}$ is the standard basis for $\R^n$. This mapping induces a bijection between the faces of $\Delta(S,\emptyset,S)$ and the faces of the cross-polytope (i.e. orthoplex) on the vertices $\{e_1,\dots,e_n,-e_1,\dots,-e_n\}$. Since this figure is homeomorphic to an $(n-1)$-sphere, so must be $\Delta(S,\emptyset,S)$.
\end{proof}
As noted, by Theorem \ref{basisthm} we have that the initial ideal of any lower bound ideal is of the form \eqref{idealform}. Therefore, by Theorem \ref{homeo}, the Stanley-Reisner complex of the initial ideal of any lower bound ideal is homeomorphic to either a ball or a sphere. It follows that all lower bound algebras over a field are Cohen-Macaulay, and so Theorem \ref{thm:CM} holds. 

\section{Normality of lower bound algebras}\label{sect:normality}

In this section, we prove that all lower bound algebras defined from a quiver are normal. As explained in the introduction, the case where $\Q$ is acyclic follows immediately because $\mathcal{L}(\Q)$ is equal to its upper cluster algebra, and is therefore normal. So, for the remainder of the section, we assume that $\Q$ contains a cycle.
In this case, our proof of normality relies on a very slight adaptation of \cite[Proposition 8.1]{KLS-Richardson}. 



\begin{prop}\label{prop:normalityInGeneral}(cf. \cite[Proposition 8.1]{KLS-Richardson})
Fix a monomial order $<$ on the polynomial ring $\mathbb{K}[z_1,\dots,z_n]$. 
Let $X$, and $Y_1,\dots, Y_r$ be (reduced and irreducible) closed affine subvarieties of $\mathbb{A}^n$, where each of $Y_1,\dots, Y_r$ are codimension-$1$ in $X$.
Assume that, with respect to the term order $<$, each of $X$ and $Y_1,\dots, Y_r$ Gr\"obner degenerate to Stanley-Reisner schemes. Then, if
\begin{enumerate}
\item the Stanley-Reisner complex of $\In_<X$ is a simplicial ball; 
\item the Stanley-Reisner complex of each $\In_<Y_i$ lies entirely on the boundary sphere $\partial \Delta_X$; and
\item $X\smallsetminus (Y_1\cup Y_2\cup \cdots \cup Y_r)$ is normal, 
\end{enumerate}
then $X$ is normal.
\end{prop}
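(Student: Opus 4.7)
The plan is to verify Serre's criterion (R1) + (S2) for $X$, closely following the original Knutson--Lam--Speyer proof. For (S2), Reisner's criterion tells us that the Stanley-Reisner ring of a simplicial ball is Cohen-Macaulay, so hypothesis (i) gives that $\In_< X$ is Cohen-Macaulay. Since Cohen-Macaulayness lifts through a Gröbner degeneration (e.g.\ \cite[Proposition 3.1]{BC03}), the coordinate ring of $X$ is also Cohen-Macaulay, giving (S2). The remaining content is (R1): showing that $X$ has no codimension-$1$ singularities.

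For (R1), hypothesis (iii) ensures that any codimension-$1$ singularity of $X$ must lie on some $Y_i$, and since each $Y_i$ is itself codimension-$1$ in $X$, it suffices to prove that $\mathcal{O}_{X,\eta_i}$ is a DVR at each generic point $\eta_i$ of $Y_i$. Combinatorially, hypothesis (ii) does the work. By flatness of the Gröbner degeneration, $\dim \In_< Y_i = \dim X - 1$, so each facet of the Stanley-Reisner complex of $\In_< Y_i$ is a codimension-$1$ face of $\Delta_X$ lying on $\partial \Delta_X$ by (ii). A codimension-$1$ face of a simplicial ball sits on the boundary if and only if it is contained in exactly one top-dimensional facet of the ball (interior codim-$1$ faces sit in exactly two). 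Therefore at the generic point $\eta_i^0$ of any such component of $\In_< Y_i$, the scheme $\In_< X$ locally coincides with a single coordinate subspace $\mathbb{A}^{|G|}$, where $G$ is the unique facet containing that face; localizing at the codim-$1$ face then yields a DVR, so $\In_< X$ is regular at $\eta_i^0$.

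Finally, I would transfer this regularity back to $X$ via the Gröbner degeneration $\mathcal{X} \to \mathbb{A}^1$, whose generic fiber is $X$ and special fiber is $\In_< X$. The generic point $\eta_i$ of $Y_i$ lifts to a point of $\mathcal{X}$ specializing to $\eta_i^0$, and openness of the regular locus of a flat morphism of finite type then yields regularity of $X$ at $\eta_i$, completing (R1). I expect the main obstacle to be making this last semicontinuity step rigorous in the Gröbner-degeneration context: one must carefully track which point of the total space realizes the specialization from $\eta_i$ to $\eta_i^0$ and confirm that flatness transports regularity in the required direction. A safer fallback is to work directly at the level of local algebra, using flatness to lift a uniformizer of $\mathcal{O}_{\In_< X, \eta_i^0}$ to an element generating the maximal ideal of the one-dimensional Cohen-Macaulay local ring $\mathcal{O}_{X, \eta_i}$, and thereby exhibiting it as a DVR.
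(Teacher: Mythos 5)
Your proposal follows essentially the same route as the paper: (S2) from Cohen--Macaulayness of the ball's Stanley--Reisner ring lifted through the Gr\"obner degeneration, and (R1) by using hypothesis (iii) away from the $Y_i$ and the boundary condition (ii) to conclude that $\In_< X$ is regular at the generic point of each component of $\In_< Y_i$ (a boundary codimension-$1$ face lies in a unique facet). The transfer step you flag as the main obstacle is exactly what the paper isolates as a separate lemma, adapting \cite[Proposition 3.1 (b)]{BC03} via $\lambda$-homogenization of the ideals of $X$ and $Y_i$ over $\mathbb{K}[t]$, so your plan matches the paper's argument.
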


We need the following standard result to prove this proposition.  It is very similar to \cite[Proposition 3.1 (b)]{BC03}; we provide the necessary modifications in the proof below.

\begin{lem}
Fix a monomial order $<$ on the polynomial ring $S \coloneqq \mathbb{K}[z_1,\dots,z_n]$. 
Let $X$ and $Y$ be irreducible affine subvarieties of $\mathbb{A}^n$, and assume that $Y$ is codimension-$1$ in $X$. 
Then if $\textrm{in}_< X$ is generically regular along each irreducible component of $\textrm{in}_< Y$ then $X$ is generically regular along $Y$.
\end{lem}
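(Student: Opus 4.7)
The plan is to use the flat Gröbner degeneration in the style of \cite[Proposition 3.1(b)]{BC03} and propagate regularity from the special fiber back to the original variety. I would pick a weight vector $w$ on $z_1,\dots,z_n$ with $\In_w I(X) = \In_< I(X)$ and $\In_w I(Y) = \In_< I(Y)$ simultaneously, and form the standard $t$-homogenization of each ideal. This produces a flat family $\X \to \mathbb{A}^1_t$ whose fiber at $t=1$ is $X$ and whose fiber at $t=0$ is $\In_< X$, together with a flat subfamily $\Y \subseteq \X$ whose fibers are $Y$ and $\In_< Y$ respectively. Since $X$ and $Y$ are irreducible, so are $\X$ and $\Y$ (each is the closure over $\{t \neq 0\}$ of $X \times \mathbb{G}_m$, resp.\ $Y \times \mathbb{G}_m$); let $\eta$ denote the generic point of $\Y$, a codimension-$1$ point of $\X$.

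The central step lifts regularity from the special fiber to the total space. Let $p$ be the generic point of any irreducible component $Z$ of $\In_< Y$. By flatness of $\Y$ over $\mathbb{A}^1_t$, the component $Z$ has dimension $\dim Y$, so $p$ has codimension $1$ in $\In_< X$ and codimension $2$ in $\X$; moreover $p$ is a specialization of $\eta$ since $\Y$ is irreducible with generic point $\eta$. Flatness of $\X$ over $\mathbb{A}^1_t$ makes $t$ a nonzerodivisor in the $2$-dimensional local ring $\mathcal{O}_{\X,p}$, with quotient $\mathcal{O}_{\X,p}/(t) = \mathcal{O}_{\In_< X,p}$. The hypothesis says this quotient is a regular local ring of dimension $1$, i.e.\ a DVR; lifting a uniformizer and adjoining $t$ produces a two-element generating set for $\mathfrak{m}_{\X,p}$, which forces $\mathcal{O}_{\X,p}$ to be regular.

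Finally, openness of the regular locus of $\X$ combined with $p \in \overline{\{\eta\}}$ forces $\X$ to be regular at $\eta$ as well. Over $\{t \neq 0\} \subseteq \mathbb{A}^1$, the $\mathbb{G}_m$-action trivializes the family as $\X|_{t \neq 0} \cong X \times \mathbb{G}_m$, with $\eta$ corresponding to the generic point of $Y \times \mathbb{G}_m$. Writing $\eta_Y$ for the generic point of $Y$, one has $\mathcal{O}_{\X,\eta} \cong \mathcal{O}_{X,\eta_Y}[t]_{\mathfrak{m}_{X,\eta_Y}[t]}$, which is a faithfully flat extension of $\mathcal{O}_{X,\eta_Y}$. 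Since $A$ is regular if and only if $A(t)$ is regular, this descends to give $\mathcal{O}_{X,\eta_Y}$ regular, i.e.\ $X$ is generically regular along $Y$. The principal technical content lies in the middle paragraph's regularity-lifting calculation; the remaining steps (openness of the regular locus and faithfully flat descent) are the packaging that extends the single-ideal BC03 argument to this codimension-$1$ subvariety statement.
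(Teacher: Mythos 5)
Your proof is correct and follows essentially the same route as the paper, which is the Gr\"obner-degeneration argument of \cite[Proposition 3.1(b)]{BC03}: homogenize to get the flat family over $\mathbb{A}^1_t$, lift regularity from the special fiber to the total space at the generic point of a component of $\In_< Y$ (the Nakayama argument with $t$ a nonzerodivisor, which is the content of \cite[Lemma 3.2]{BC03}), pass to the generic point of $\Y$ by generization, and descend along the faithfully flat extension $\mathcal{O}_{X,\eta_Y}\to\mathcal{O}_{X,\eta_Y}[t]_{\mathfrak{m}[t]}$. The only difference is presentational: the paper outsources the middle and final steps to citations of \cite{BC03} and works with the graded ring localized at the irrelevant ideal, whereas you unpack those steps directly at the relevant points of the total space.
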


\begin{proof}
Let $X = \mathbb{V}(I)$ and $Y = \mathbb{V}(J)$ for $I,J\subseteq \mathbb{K}[z_1,\dots, z_n]$.
Pick a weight vector $\lambda$ such that $\textrm{in}_\lambda I = \textrm{in}_< I$ and $\textrm{in}_\lambda J = \textrm{in}_< J$.
Let $f = \sum_i a_im_i$, where each $a_i\in \mathbb{K}$, and each $m_i$ is a monomial. Let $\textrm{hom}_{\lambda}(f)$ denote the $\lambda$-homogenization of $f$ inside $S[t]$, that is,
\[\textrm{hom}_\lambda(f) \coloneqq \sum a_im_it^{\lambda(f)-\lambda(m_i)},\] where $\lambda(f)$ denotes the highest $\lambda$-weight of any monomial in $f$, and $\lambda(m_i)$ is the $\lambda$-weight of the monomial $m_i$.
Let $\textrm{hom}_\lambda I$ denote the $\lambda$-homogenization of the ideal $I$, that is,
$\textrm{hom}_\lambda I \coloneqq \langle \textrm{hom}_\lambda(f) \mid f\in I \rangle.$ 
It is a standard fact that $A \coloneqq S[t]/\textrm{hom}_{\lambda} I$ is a free $\mathbb{K}[t]$-module and that $A/\langle t\rangle \cong S/\textrm{in}_{\lambda} I$ (see eg. \cite[Proposition 2.4]{BC03} or \cite[Theorem 15.17]{Eis95}).

Now, by assumption, $\textrm{in}_< X$ is generically regular along each irreducible component of $\textrm{in}_< Y$. That is, the localization of $S/\textrm{in}_{<} I$ at any minimal prime of $\textrm{in}_< J$ is a regular local ring. 
Thus, by the above facts, we have that the localization of $A/\langle t\rangle$ at any minimal prime of $(\textrm{hom}_\lambda J+\langle t\rangle)$ is a regular local ring. 

Observe that $A$ is positively graded. Let $\frak{m}$ denote the maximal ideal generated by the indeterminates $z_1,...,z_n,t$, and let $A_{\frak{m}}$ denote the localization at $\frak{m}$. 
Because $A/\langle t \rangle$ localized at any minimal prime $\frak{p}$ of $(\textrm{hom}_\lambda J+\langle t\rangle)$ is regular, so too is $A_\frak{m}/\langle t\rangle$ localized at any non-trivial $A_\frak{m} \frak{p}$, and the non-trivial $A_\frak{m}\frak{p}$ are precisely the minimal primes of $(\textrm{hom}_\lambda J+\langle t\rangle)$ as an ideal in $A_\frak{m}/\langle t\rangle$.

Now we can use the proof of \cite[Lemma 3.2]{BC03} to get that the localization of $A_{\frak{m}}$ at the height-$1$ prime ideal $\textrm{hom}_\lambda J$ is regular. The second half of the proof of \cite[Proposition 3.1 (b)]{BC03} then gives that the localization of $S/I$ at $J$ is regular.
\end{proof}

We now prove Proposition \ref{prop:normalityInGeneral}.

\begin{proof}[Proof of Proposition \ref{prop:normalityInGeneral}]
We follow the proof given in \cite[Proposition 8.1]{KLS-Richardson}. 
To show that $X$ is normal, we need to show that $X$ is $R1$ and $S2$. Since $\Delta_X$ is a simplicial ball by assumption (i), it follows that $X$ is Cohen-Macaulay, and hence $S2$.
To show that $X$ is $R1$, first observe that, by assumption (iii), if $\frak{p}\subseteq S/I$ is a prime ideal of height $\leq 1$ which \emph{is not} the generic point of any $Y_i$, then $(S/I)_{\frak{p}}$ is regular. 

The remaining primes in $S/I$ which have height $\leq 1$ are the generic points of the various $Y_i\subseteq X$.
It therefore remains to show that $X$ is generically regular along each irreducible subvariety $Y_i$. 
By assumption (ii), we have that $\textrm{in}_< X$ is generically regular along each irreducible component of each $\textrm{in}_< Y_i$. Thus, by the lemma, we get that $X$ is generically regular along $Y_i$.
\end{proof}

To use Proposition \ref{prop:normalityInGeneral} to prove that all lower bound algebras are normal, we need to show that the hypotheses (ii) and (iii) always hold for quivers with cycles. We start with (ii). To show the desired result, we use results of Knutson from \cite{Knu09}\footnote{The statement given here is less general than the one that appears in \cite[Theorem 2]{Knu09} and \cite[Lemma 6]{Knu09}. We also change the hypotheses of \cite[Theorem 2]{Knu09}, however, this is harmless as the proof goes through in the exact same way.}. 

\begin{thm}\label{thm:Knutson}(cf. Theorem 2, Lemma 6, Corollary 2 of \cite{Knu09})
Let $f \in \mathbb{Z}[z_{1}, \ldots, z_{n}]$ be a polynomial with the property that, for each prime $p$, $f^{p-1} (\text{mod }p)$ has a unique term divisible by $z_1^{p-1}z_2^{p-1}\cdots z_n^{p-1}$, and let $<$ be a term order of $\mathbb{Z}[z_1,\dots,z_n]$ for which $\textrm{in}_<f = z_1z_2\cdots z_n$.
Denote by $\mathcal{J}$ the smallest set of ideals that contains the ideal $\langle f \rangle$
and such that 
\begin{enumerate}
\item  if $I_{1}, I_{2} \in \mathcal{J}$, then $I_{1} + I_{2}, I_{1} \cap I_{2} \in \mathcal{J}$; and
\item  if $I \in \mathcal{J}$ and $J$ is a primary component of $I$ then
$J \in \mathcal{J}$.
\end{enumerate}
Then, over any field $\mathbb{K}$, every ideal $J \in \mathcal{J}$ is a radical ideal and
the initial ideal of every $J \in \mathcal{J}$ with respect to $<$ is a squarefree monomial ideal.
Furthermore, for any $I_1$ and $I_2$ in  $\mathcal{J}$,
\[\textrm{in}_<(I_1\cap I_2) = \textrm{in}_< I_1 \cap \textrm{in}_< I_2,\textrm{ and }\textrm{in}_<(I_1+I_2) = \textrm{in}_< I_1+\textrm{in}_< I_2.\]
\end{thm}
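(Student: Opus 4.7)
The plan is to deduce this theorem from Knutson's original results \cite{Knu09} by verifying that the hypotheses carry over with only cosmetic changes. The central idea in Knutson's framework is \emph{Frobenius splitting}: the hypothesis that $f^{p-1} \pmod p$ has a unique term divisible by $z_1^{p-1}\cdots z_n^{p-1}$ (necessarily with coefficient $1$) is exactly Fedder's criterion, which implies that, in characteristic $p$, the polynomial ring admits a Frobenius splitting with respect to which $\langle f \rangle$ is compatibly split. The bulk of what we need is then a formal consequence of the machinery of compatibly split ideals.

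The first step is to check that the collection of ideals in $\mathbb{F}_p[z_1,\ldots,z_n]$ compatibly split by a given Frobenius splitting is closed under sums, intersections, and primary components. This is standard (see for instance Brion--Kumar's monograph on Frobenius splitting), and it implies that every ideal in the family $\mathcal{J}$ is compatibly split modulo every prime $p$. Compatibly split ideals are automatically radical by Fedder's lemma, which gives the first conclusion.

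The second step concerns the initial ideals. Knutson's hypothesis $\textrm{in}_<f = z_1\cdots z_n$ is designed precisely so that the geometric vertex decomposition argument of \cite[Lemma 6]{Knu09} applies: for such a term order, compatibly split ideals degenerate to squarefree monomial ideals, and moreover the operation $I \mapsto \textrm{in}_< I$ commutes with both $\cap$ and $+$ within the family $\mathcal{J}$. I would invoke this lemma directly in our setting, as the only input it needs is that the ambient ring admits the Fedder-type splitting from $f$.

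Finally, the third step is to lift from $\mathbb{F}_p$ to an arbitrary field $\mathbb{K}$. The natural route is to work integrally: one constructs the analogous family over $\mathbb{Z}[z_1,\ldots,z_n]$, observes that radicality, squarefreeness of initial ideals, and commutation of $\textrm{in}_<$ with $\cap$ and $+$ are constructible conditions on $\mathrm{Spec}\,\mathbb{Z}$ which hold at every closed point, hence hold at the generic point, and then base-change to $\mathbb{K}$. The main obstacle I anticipate is bookkeeping rather than substance: Knutson's statement is phrased in slightly different generality, and one must confirm that replacing his starting data by the single polynomial $\langle f\rangle$ satisfying Fedder's criterion is still enough to invoke each step of his proof verbatim. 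As the authors themselves note, this adjustment is harmless.
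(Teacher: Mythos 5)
Your proposal matches the paper's treatment: the authors give no independent proof of this theorem, instead citing Knutson's Theorem 2, Lemma 6, and Corollary 2 with a footnote asserting that the slightly modified hypotheses are harmless, and your outline (Fedder's criterion giving a compatible Frobenius splitting of $\langle f\rangle$, closure of compatibly split ideals under sums, intersections, and primary components, degeneration to squarefree initial ideals commuting with $\cap$ and $+$, and reduction mod $p$ to reach arbitrary $\mathbb{K}$) is precisely the argument of Knutson's being invoked. No gap to report.
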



We will make use of this theorem in the case where $f = \prod_{i=1}^n (x_iy_i-p_i^+-p_i^-)$. Here we take $<$ to be a weighting of the variables where the $y$-variables are much more expensive than the $x$-variables. Observe that $f$ and $<$ satisfy the assumptions of Theorem \ref{thm:Knutson} and the ideal
\[ 
I_\Q := \langle x_iy_i-p_i^+-p_i^-\mid 1\leq i\leq n \rangle.
\]
lies in the collection of ideals $\mathcal{J}$ from the theorem. 
Consequently $I$ is radical. We may then write an irredundant prime decomposition 
\begin{equation}\label{primeDecompositionl}
I_\Q = K_\Q\cap P_1\cap\cdots \cap P_r
\end{equation}
where each $P_i$ is a minimal prime, and $K_\Q$ is the lower bound ideal \cite[Lemma 5.7]{BMRS15}. Consequently, each $P_i+K_\Q\in \mathcal{J}$ and so each $P_i+K_\Q$ is radical and degenerates to a squarefree monomial ideal. 

\begin{prop}\label{prop:boundarySphere}
Let $\Q$ be a quiver with a directed cycle, so that there is at least one prime $P_i$ in \eqref{primeDecompositionl}.
With respect to a term order where the $y$-variables are much more expensive than the $x$-variables, each prime component of $P_i+K_\Q$ Gr\"obner degenerates to the Stanley-Reisner ideal of a sub-simplicial complex of $\partial \Delta_{K_\Q}$. 
Furthermore,
$\textrm{in}_< ((P_1\cap\cdots \cap P_r) + K_\Q)$ is the Stanley-Reisner ideal of the entire boundary $\partial \Delta_{K_\Q}$.
\end{prop}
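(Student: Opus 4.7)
The plan is to translate the two claims into simplicial combinatorics via Knutson's Theorem~\ref{thm:Knutson}, and then to identify $\partial\Delta_{K_\Q}$ as the geometric intersection of $\Delta_{K_\Q}$ with the closure of its complement in the ambient cross-polytope sphere, which is the Stanley-Reisner complex $C$ of $\textrm{in}_< I_\Q=\langle x_iy_i\rangle$.

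First I would apply Theorem~\ref{thm:Knutson} iteratively to the decomposition $I_\Q=K_\Q\cap P_1\cap\cdots\cap P_r$. Since $K_\Q$ and the $P_j$ are primary components of $I_\Q\in\mathcal{J}$, all of $P_1\cap\cdots\cap P_r$, each $P_i+K_\Q$, $(P_1\cap\cdots\cap P_r)+K_\Q$, and their prime components lie in $\mathcal{J}$. The compatibility of $\textrm{in}_<$ with $+$ and $\cap$ then yields
\[
\textrm{in}_<\!\big((P_1\cap\cdots\cap P_r)+K_\Q\big)=\Big(\bigcap_{i}\textrm{in}_< P_i\Big)+\textrm{in}_< K_\Q,
\]
and similarly $\textrm{in}_<(P_i+K_\Q)=\textrm{in}_< P_i+\textrm{in}_< K_\Q$, with all relevant initial ideals squarefree monomial. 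Under the standard dictionary (intersection of ideals $\leftrightarrow$ union of complexes; sum $\leftrightarrow$ intersection), the displayed ideal has Stanley-Reisner complex $\Delta_{K_\Q}\cap \bigcup_i \Delta_{P_i}$, where $\Delta_{P_i}$ denotes the Stanley-Reisner complex of $\textrm{in}_< P_i$.

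The crux is proving $\bigcup_i \Delta_{P_i}=\overline{C\setminus \Delta_{K_\Q}}$. Since $\textrm{in}_< I_\Q=\textrm{in}_< K_\Q\cap\bigcap_i \textrm{in}_< P_i$, every facet of $C$ is already a facet of $\Delta_{K_\Q}$ or of some $\Delta_{P_i}$, so the claim reduces to the disjointness of these two facet sets. This is the main obstacle. To prove it, observe that $K_\Q$ and $P_i$ are distinct minimal primes of $I_\Q$, so $V(K_\Q)\not\subseteq V(P_i)$; hence $V(K_\Q+P_i)$ is a proper closed subset of the irreducible $n$-dimensional $V(K_\Q)$, forcing $K_\Q+P_i$ to have height at least $n+1$. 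Since Gr\"obner degeneration preserves the Hilbert function, $\textrm{in}_<(K_\Q+P_i)=\textrm{in}_< K_\Q+\textrm{in}_< P_i$ also has height at least $n+1$ and so is not contained in any coordinate prime $\mathfrak{m}_F=\langle z_1,\ldots,z_n\rangle$ with $z_i\in\{x_i,y_i\}$, each of which has height $n$. Consequently, no facet $F$ of $C$ is a minimal prime of both $\textrm{in}_< K_\Q$ and $\textrm{in}_< P_i$, completing the disjointness argument; purity in dimension $n-1$ then matches the two pure subcomplexes facet-by-facet.

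A standard simplicial-topological fact (a simplex of a pure $(n-1)$-ball inside an $(n-1)$-sphere lies in the closure of the complementary region exactly when it lies on the boundary of the ball) now gives $\Delta_{K_\Q}\cap\overline{C\setminus \Delta_{K_\Q}}=\partial\Delta_{K_\Q}$, proving assertion~2. For assertion~1, write $P_i+K_\Q=\bigcap_\ell Q_\ell$ as an irredundant prime decomposition; each $Q_\ell\in\mathcal{J}$, so $\textrm{in}_< Q_\ell$ is a Stanley-Reisner ideal, and the containment $\textrm{in}_< Q_\ell\supseteq \textrm{in}_<(P_i+K_\Q)=\textrm{in}_< P_i+\textrm{in}_< K_\Q$ implies that the Stanley-Reisner complex of $\textrm{in}_< Q_\ell$ is contained in $\Delta_{P_i}\cap \Delta_{K_\Q}\subseteq \partial\Delta_{K_\Q}$, as required.
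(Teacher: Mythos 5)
Your proposal is correct, and while it shares the paper's scaffolding (Knutson's theorem for compatibility of $\textrm{in}_<$ with sums and intersections, the decomposition of the cross-polytope sphere $\Delta_{I_\Q}$ into $\Delta_{K_\Q}$ and the closure of its complement, and the identification of that overlap with $\partial\Delta_{K_\Q}$), you handle the crux differently. The paper argues locally: it takes a maximal face $F$ of a prime component's complex $\Delta_J$ not lying on $\partial\Delta_{K_\Q}$, uses the ball/complement decomposition to force $F$ to be a facet of $\Delta_{P_i}$, and then contradicts $\dim(S/J)<\dim(S/P_i)$. You instead prove the sharper global identity $\bigcup_i\Delta_{P_i}=\overline{\Delta_{I_\Q}\smallsetminus\Delta_{K_\Q}}$ by showing the facet sets of $\Delta_{K_\Q}$ and of the $\Delta_{P_i}$ are disjoint: since $K_\Q$ and $P_i$ are distinct minimal primes of $I_\Q$, the ideal $K_\Q+P_i$ has height at least $n+1$, Gr\"obner degeneration preserves Hilbert functions and hence height, so $\textrm{in}_<K_\Q+\textrm{in}_<P_i$ cannot sit inside a height-$n$ coordinate prime. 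This is a genuinely different and arguably cleaner mechanism; it also delivers the ``Furthermore'' clause as an immediate equality rather than via the paper's separate two-containment argument. Two small caveats: your ``facet-by-facet'' matching needs that each $\Delta_{P_i}$ is pure of dimension $n-1$, which follows from the Kalkbrener--Sturmfels theorem on initial complexes of prime ideals and deserves an explicit citation (the paper leans on the same fact when it says a facet of $\Delta_{P_i}$ has dimension $\dim(S/P_i)-1$); and the coordinate prime you call $\mathfrak{m}_F$ should be generated by the variables \emph{complementary} to the facet $F$, a harmless relabeling.
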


\begin{proof}
By Theorem \ref{thm:Knutson}, we have that $P_i+K_\Q$ is radical and Gr\"obner degenerates to a squarefree monomial ideal. Let 
\[
K_\Q+P_i = \cap_J J
\]
be a decomposition of $K_\Q+P_i$ into minimal primes. By Theorem \ref{thm:Knutson}, each $\textrm{in}_< J$ is a squarefree monomial ideal. Applying the second part of Theorem \ref{thm:Knutson} and translating into the language of simplicial complexes yields the equality
\[
\Delta_{K_\Q+P_i} = \Delta_{K_\Q}\cap \Delta_{P_i} = \cup_J \Delta_J
\]
where  $\Delta_J$ denotes the Stanley-Reisner complex $\textrm{in}_< J$. To prove the first claim of the proposition, we must show that every face of each $\Delta_J$ is contained in the boundary sphere of the simplicial ball $\Delta_{K_\Q}$. 
So, suppose otherwise, and let $F$ be a face of some $\Delta_J$ which is not contained in the boundary $\partial \Delta_{K_\Q}$. Assume that $F$ is a maximal such face. We claim that $F$ must be a facet of $\Delta_{P_i}$.  

To prove this claim, we first apply Theorem \ref{thm:Knutson} to the prime decomposition in equation \eqref{primeDecompositionl} to get
\begin{equation}\label{eq:simplicialEquality}
\textrm{in}_< I = \textrm{in}_< (K_\Q)\cap \textrm{in}_<(P_1)\cap\cdots \cap \textrm{in}_<(P_r)
\end{equation}
which, after translating into the language of simplicial complexes, says that $\Delta_{K_\Q}$ and every $\Delta_{P_i}$ is contained in the Stanley-Reisner complex associated to $\textrm{in}_< I = \langle x_iy_i \mid 1\leq i\leq n \rangle$, which can be geometrically realized as the $(n-1)$-dimensional boundary sphere of a cross-polytope on $2n$ vertices. Decompose this simplicial sphere into the union of two $(n-1)$-dimensional simplicial balls:
\[
\Delta_I = \Delta_{K_\Q}\cup C, \textrm{  where  } C \coloneqq \overline{\Delta_I\smallsetminus \Delta_{K_\Q}}.
\]
Observe that, by construction, $\Delta_{K_\Q}\cap C$ is the boundary sphere of $\Delta_{K_\Q}$.


Now, suppose that $F$ is not a facet of $\Delta_{P_i}$. Then there is a vertex $z$ such that $F\cup\{z\}$ is a face of $\Delta_{P_i}$.
Then, using the decomposition of $\Delta_I$, we see that either $F\cup\{z\}$ is contained in $\Delta_{K_\Q}$, or it is contained in $C$. If $F\cup \{z\}\subseteq \Delta_{K_\Q}$, we contradict the maximality of $F$. If $F\cup\{z\}\in C$, we contradict that $F$ was not contained in the boundary of $\Delta_{K_\Q}$ (since $\Delta_{K_\Q}$ and $C$ only intersect along the boundary of $\Delta_{K_\Q}$).

Thus, our maximal face $F$ must be a facet of $\Delta_{P_i}$, which, since $P_i$ is prime, must have dimension one less than the dimension $\mathrm{dim}(S/P_i)$. But this is not possible because $\mathrm{dim}(S/J)$ is strictly smaller than $\mathrm{dim}(S/P_i)$.

To obtain the last statement, we translate equality (\ref{eq:simplicialEquality}) into the language of simplicial complexes to see that the union $\cup_{i=1}^r \Delta_{P_i}$ necessarily contains the boundary sphere $\partial \Delta_{K_\Q}$. Thus, so does 
\[\Delta_{(P_1\cap\cdots \cap P_r)+K_\Q)} = \Delta_{(P_1\cap\cdots\cap P_r)}\cap \Delta_{K_\Q} = \cup_{i=1}^r (\Delta_{P_i}\cap \Delta_{K_\Q}). \]
But, as already shown, each $\Delta_{P_i}\cap \Delta_{K_\Q}$ is contained inside of the boundary sphere of $\Delta_{K_\Q}$ and so we are done.
\end{proof}

We next show that (iii) of Proposition \ref{prop:normalityInGeneral} holds for lower bound algebras.

\begin{prop}\label{prop:openSetIsNormal}
Let $\mathbb{V}(K_\Q)$ denote the lower bound variety of a quiver $\Q$.  Then $\mathbb{V}(K_\Q)\smallsetminus \mathbb{V}(P_1\cap P_2 \cap \cdots \cap P_r)$ is normal\footnote{We note that $\mathbb{V}(P_1\cap P_2\cap\cdots\cap P_r)$ is empty when $\Q$ is acyclic.}.
\end{prop}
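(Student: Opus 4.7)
Since $\mathbb{V}(K_\Q)$ is Cohen-Macaulay (Theorem \ref{thm:CM}), $U \coloneqq \mathbb{V}(K_\Q)\smallsetminus \mathbb{V}(P_1\cap\cdots\cap P_r)$ automatically satisfies $S_2$, and Serre's criterion reduces normality to the $R_1$ condition: the singular locus of $\mathbb{V}(K_\Q)$ intersected with $U$ must have codimension at least $2$. The plan is to establish this via a Jacobian calculation on the complete intersection $\mathbb{V}(I_\Q)$. The key initial reduction is that $U$ can be viewed as an open subscheme of $\mathbb{V}(I_\Q)$: since $I_\Q = K_\Q \cap P_1 \cap \cdots \cap P_r$ is radical (Theorem \ref{thm:Knutson}), the local rings of $\mathbb{V}(K_\Q)$ and of $\mathbb{V}(I_\Q)$ coincide at every point of $U$, because the primary components $P_i$ are units in these localizations.

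The ideal $I_\Q$ is cut out by the $n$ polynomials $f_i = x_iy_i - p_i^+ - p_i^-$ (or $f_i = x_iy_i - 1$ for frozen $i$), giving a complete intersection of dimension $n$ in $\mathbb{A}^{2n}$. Its Jacobian has the block form $J = (A \mid \mathrm{diag}(x_1,\ldots,x_n))$, where the $y$-partials produce $\mathrm{diag}(x_1,\ldots,x_n)$ (the $p_i^\pm$ involve no $y$-variables and, by the absence of loops in $\Q$, do not involve $x_i$), while $A_{ii} = y_i$ and $A_{ij} = -\partial(p_i^++p_i^-)/\partial x_j$ for $i\neq j$. For each $T\subseteq[n]$, selecting the $y$-columns indexed by $[n]\smallsetminus T$ and the $x$-columns indexed by $T$ yields the $n\times n$ minor $\pm\prod_{j\notin T} x_j\cdot \det(A_{T,T})$. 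By the Jacobian criterion, $\mathbb{V}(I_\Q)$ is smooth at $(x,y)$ as soon as one of these minors is nonzero; in particular, the $T=\emptyset$ minor $\prod_j x_j$ gives smoothness wherever all $x_i\neq 0$, so the singular locus of $\mathbb{V}(I_\Q)$ is contained in $\bigcup_i \mathbb{V}(x_i)$.

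The core task is then to show that every codimension-$1$ subvariety $Z$ of $\mathbb{V}(K_\Q)$ meeting $U$ is generically smooth in $\mathbb{V}(I_\Q)$. Such a $Z$ cannot be contained in any $\mathbb{V}(P_i)$ (or else it misses $U$), and by the previous paragraph must be an irreducible component of $\mathbb{V}(K_\Q)\cap \mathbb{V}(x_j)$ for some $j$. At a generic point of such a component, $x_j = 0$ but all other $x_i$ remain nonzero; the $T = \{j\}$ minor then evaluates to $\prod_{i\neq j} x_i \cdot y_j$, so smoothness holds as soon as $y_j\neq 0$ generically on $Z$. Thus the remaining claim to verify is that the alternative, namely the existence of a codimension-$1$ component of $\mathbb{V}(K_\Q)$ along which \emph{both} $x_j$ and $y_j$ vanish generically, can occur only inside some $\mathbb{V}(P_i)$. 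The idea is that imposing $x_j = 0$ and $y_j = 0$ kills the relation $f_j$ entirely, and when combined with the cycle relations of $K_\Q$, this forces enough additional $x$-coordinates to vanish simultaneously that the putative component is absorbed into a cycle-supported minimal prime.

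The principal obstacle is precisely this last claim: ruling out genuine codimension-$1$ singular loci of $\mathbb{V}(K_\Q)$ lying outside all the $\mathbb{V}(P_i)$. I expect to handle it by combining two ingredients: the Stanley-Reisner description of $\mathbb{V}(K_\Q)\cap \mathbb{V}(P_1\cap\cdots\cap P_r)$ from Proposition \ref{prop:boundarySphere}, which identifies this intersection combinatorially with the full boundary sphere $\partial\Delta_{K_\Q}$ of the ball $\Delta_{K_\Q}$, and a direct analysis of the cycle polynomials showing that any codimension-$1$ subvariety on which a pair $x_j, y_j$ vanishes must correspond (after Gr\"obner degeneration) to a face of $\partial\Delta_{K_\Q}$, and hence to a component of some $\mathbb{V}(P_i)$.
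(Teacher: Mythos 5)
Your reduction to the Jacobian criterion on the complete intersection $\mathbb{V}(I_\Q)$ is a legitimate starting point: $U \coloneqq \mathbb{V}(K_\Q)\smallsetminus\mathbb{V}(P_1\cap\cdots\cap P_r)$ is indeed a common open subscheme of $\mathbb{V}(K_\Q)$ and $\mathbb{V}(I_\Q)$, and the $T=\emptyset$ minor $\prod_j x_j$ correctly places the singular locus inside $\bigcup_i\mathbb{V}(x_i)$. But the argument is not complete, and the gap sits exactly where you flag it: the claim that a codimension-$1$ subvariety of $\mathbb{V}(K_\Q)$ on which some pair $x_j,y_j$ vanishes generically must be absorbed into some $\mathbb{V}(P_i)$ is essentially the whole content of the proposition, and the mechanism you sketch for it does not work. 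Your proposed engine is ``the cycle relations force additional $x$-coordinates to vanish,'' but the claim is needed even where no cycle relation is available: for acyclic $\Q$ there are no $P_i$ and no cycle polynomials at all, yet one must still rule out codimension-$1$ loci with $x_j=y_j=0$ (Appendix A shows such loci exist in type $A_n$ but have high codimension --- and establishing that codimension is itself a computation, not a formality). Moreover, Proposition \ref{prop:boundarySphere} controls the Gr\"obner degenerations of the ideals $P_i+K_\Q$; it does not let you conclude that an arbitrary codimension-$1$ subvariety $Z\subseteq\mathbb{V}(K_\Q)$ with $x_j,y_j$ vanishing degenerates to a face of $\partial\Delta_{K_\Q}$, since the initial scheme of $Z$ is only known to sit somewhere inside $\Delta_{K_\Q}$. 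A secondary gap: you assert that at a generic point of a component of $\mathbb{V}(K_\Q)\cap\mathbb{V}(x_j)$ all other $x_i$ are nonzero; if several $x_i$ vanish along $Z$ the relevant minor is $\pm\det(A_{S,S})\prod_{i\notin S}x_i$ for the full vanishing set $S$, and nonvanishing of $\det(A_{S,S})$ is a genuinely harder condition than $y_j\neq 0$.

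The paper avoids the Jacobian entirely and argues pointwise. For $q\in\mathbb{V}(K_\Q)$ let $S_q=\{i : x_i(q)=0\}$ (necessarily unfrozen indices). If $S_q$ contains no directed cycle, then the localization of $\L(\Q)$ at $\{x_i : i\notin S_q\}$ is the lower bound algebra of the acyclic ice quiver obtained by freezing the vertices outside $S_q$; by \cite{BFZ05} and \cite{MulLA} this equals its upper cluster algebra and is normal, so $q$ is a normal point of $\mathbb{V}(K_\Q)$. If $S_q$ does contain a directed cycle, the affine subspace $\mathbb{W}_q$ obtained by fixing all coordinates of $q$ except the $y_i$ with $i\in S_q$ lies in $\mathbb{V}(I_\Q)$ but not in $\mathbb{V}(K_\Q)$, because the cycle polynomial of a cycle inside $S_q$ has leading term a product of the freed $y$-variables; irreducibility of $\mathbb{W}_q$ then forces $q\in\mathbb{W}_q\subseteq\mathbb{V}(P_1\cap\cdots\cap P_r)$, i.e.\ $q\notin U$. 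That dichotomy --- acyclic localization gives normality, cyclic vanishing set expels the point from $U$ --- is the idea your proposal is missing, and simply asserting the final combinatorial claim does not substitute for it.
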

\begin{proof}
Consider $q\in \mathbb{V}(K_\Q)$, and define the (possibly empty) set 
\[ S_q \coloneqq \{ i\in \{1,2,...,n \} \mid x_i(q) =0 \} \]
The vertices indexed by $S_q$ must be unfrozen, since frozen $x$-variables are invertible.

First, assume $S_q$ does not contain a directed cycle, and consider the open set
\[ U_q \coloneqq \{ q'\in \mathbb{V}(K_\Q) \mid \forall i\not\in S_q,\;x_i(q)\neq 0\} \]
The coordinate ring of $U_q$ is the localization of $\L(\Q)$ at the set of $x$-variables which are not in $S_q$; hence, it is isomorphic to the lower bound algebra of the ice quiver $\Q^\dagger$ obtained by freezing the vertices not in $S_q$.  This ice quiver is \emph{acyclic}, and so the lower bound algebra coincides with the upper cluster algebra \cite{BFZ05}, which is normal \cite{MulLA}.  Hence, $\mathbb{V}(K_\Q)$ is normal at $q$.

Next, assume $S_q$ contains a directed cycle, and consider the affine space 
\[ \mathbb{W}_q \coloneqq \{ q' \in \mathbb{K}^{2n} \mid \forall i, \;x_i(q')=x_i(q),\text{ and } \forall i \not\in S_q,\; y_i(q')=y_i(q)\} \]
This contains $q$ and is contained in $\mathbb{V}(K_\Q\cap P_1 \cap \cdots \cap P_r)$.  Since $S_q$ contains a directed cycle, there is some cycle polynomial whose leading term is a product of $y$-variables whose indices are contained in $S_q$, and hence cannot vanish on $\mathbb{W}_q$.  Hence, $\mathbb{W}_q\not\subset \mathbb{}V(K_\Q)$. By the irreducibility of $\mathbb{W}_q$, we have that $q\in\mathbb{W}_q\subset \mathbb{V}(P_1\cap \cdots\cap P_r)$, and so $q\not\in \mathbb{V}(K_\Q)\smallsetminus \mathbb{V}(P_1\cap \cdots \cap P_r)$.
\end{proof}

We can now prove that all lower bound algebras are normal.

\begin{proof}[Proof of Theorem \ref{thm:normalityOfLowerBounds}]
We have already treated the case when $\Q$ is acyclic (i.e. $\mathcal{L}(\Q)$ equals its associated upper cluster algebra, and is hence normal). So assume that $\Q$ contains a cycle.
Let $K_\Q$ be the relevant lower bound ideal, and let $P_1,\dots, P_r\subseteq S$ be minimal primes of $\langle x_iy_i-p_i^+-p_i^-\mid 1\leq i\leq n \rangle$ such that 
\[
\langle x_iy_i-p_i^+-p_i^-\mid 1\leq i\leq n \rangle = K_\Q\cap P_1\cap\cdots \cap P_r.
\]
Now apply Proposition \ref{prop:normalityInGeneral} with $X = \mathbb{V}(K_\Q)$, and $Y_1,\dots,Y_s$ the irreducible components of the various $\mathbb{V}(P_j + K_\Q)$, $1\leq j\leq r$. Observe that item (i) of Proposition \ref{prop:normalityInGeneral} follows from Theorem \ref{thm:CM}, item (ii) follows from Proposition \ref{prop:boundarySphere}, and item (iii) holds by Proposition \ref{prop:openSetIsNormal}.
\end{proof}

\appendix

\section{The singular locus of $\L(\Q)$}\label{section: singularity}

The Cohen-Macaulayness and normality of $\L(\Q)$ may both be regarded as constraints on the singularities of the variety $\mathbb{V}(K_\Q)$.  It is then natural to directly consider the singularities of $\mathbb{V}(K_\Q)$.
This appendix provides an example to demonstrate that, even in the most elementary cases, even the existence of singularities in $\mathbb{V}(K_\Q)$ can be difficult to predict, suggesting a direct study of of the singularities of $\mathbb{V}(K_\Q)$ may be daunting.

Fix an algebraically closed field $\mathbb{K}$.
Let $\Q_n$ denote the ice quiver with vertex set $\{1,2,...,n\}$, an arrow from $i$ to $i+1$ for each $1\leq i <n$, and no frozen vertices (see Figure \ref{fig: Atype}).  

\begin{figure}[h!t]
\begin{tikzpicture}
	\node[mutable] (1) at (0,0) {$1$};
	\node[mutable] (2) at (2,0) {$2$};
	\node[mutable] (3) at (4,0) {$3$};
	\node[mutable] (5) at (7,0) {$n$};
	\draw[-angle 90] (1) to (2);
	\draw[-angle 90] (2) to (3);
	\draw[-] (3) to (5,0);
	\draw[-angle 90] (6,0) to (5);
	\draw[dotted] (5,0) to (6,0);
\end{tikzpicture}
\caption{The quiver $\Q_n$}
\label{fig: Atype}
\end{figure}

\begin{prop}
The $\mathbb{K}$-variety $\mathbb{V}(K_{\Q_n})$ has a unique singular point when $n\equiv 3\; (\text{mod }4)$, and no singularities otherwise.
\end{prop}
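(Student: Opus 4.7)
The plan is to apply the Jacobian criterion, exploiting the fact that $\Q_n$ is acyclic. By Theorem \ref{thm: grobner}, $K_{\Q_n}$ is generated by the $n$ defining polynomials
\[
f_i \coloneqq y_i x_i - x_{i-1} - x_{i+1}, \qquad 1 \le i \le n,
\]
(adopting the boundary convention $x_0 = x_{n+1} = 1$), and has initial ideal $\langle x_i y_i \mid 1 \le i \le n\rangle$ of codimension $n$. So $\mathbb{V}(K_{\Q_n}) \subseteq \mathbb{A}^{2n}_{\mathbb{K}}$ is a complete intersection of pure dimension $n$, and a point $q$ is smooth iff the $n \times 2n$ Jacobian $J = [\,A \mid \mathrm{diag}(x_1,\dots,x_n)\,]$ has rank $n$ at $q$, where $A$ is the tridiagonal $n \times n$ matrix with $y_i$'s on the diagonal and $-1$'s on the super- and sub-diagonals. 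Setting $S \coloneqq \{i : x_i(q) = 0\}$, the right block contributes the basis vectors $e_i$ for $i \notin S$, so $J$ has full rank iff the submatrix $A_S$ consisting of the rows of $A$ indexed by $S$ has rank $|S|$.

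The first reduction constrains $S$: $f_i = 0$ with $i \in S$ forces $x_{i-1} + x_{i+1} = 0$, so if $i, i+1 \in S$ then $i-1, i+2 \in S$, and iterating gives $1 \in S$ with $x_2 = 0$, contradicting $x_0 + x_2 = 1$. Thus $S$ has no two consecutive integers. The left-kernel condition on $A_S$ then splits column-wise into $c_j y_j = 0$ for each $j \in S$ and $c_{j-1}\,[j-1 \in S] + c_{j+1}\,[j+1 \in S] = 0$ for each $j \notin S$. Given a nonzero kernel vector, let $T \coloneqq \{i \in S : c_i \ne 0\}$. The constraint at $j = i - 1$ (with $i \in T$, $i \ge 2$) forces $i - 2 \in T$ when $i \ge 3$, and reduces to $c_i = 0$ (a contradiction) when $i = 2$; symmetric reasoning at $j = i+1$ gives $i + 2 \in T$ when $i \le n - 2$, and excludes $i = n - 1$ from $T$. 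Propagating these implications, $T$ must equal the full arithmetic progression $\{1, 3, 5, \ldots, n\}$, which is possible only when $n$ is odd; since $S \supseteq T$ has no consecutive pair, $S = T$, and the diagonal constraint pins down $y_i = 0$ for every odd $i$.

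Finally, I would solve the stratum equations: with $S = \{1, 3, \ldots, n\}$ and $n = 2m+1$, the relations at odd $i$ reduce to the recursion $x_{2k+2} = -x_{2k}$, which with $x_0 = 1$ yields $x_{2k} = (-1)^k$; the terminal condition $x_{2m} + x_{n+1} = (-1)^m + 1 = 0$ is satisfied precisely when $m$ is odd, i.e., $n \equiv 3 \pmod 4$. In that case the even-indexed relations $y_{2k} x_{2k} = x_{2k-1} + x_{2k+1} = 0$ (with $x_{2k} \ne 0$) also force $y_{2k} = 0$, producing the unique singular point with $x$-coordinates $(0, -1, 0, 1, \ldots, -1, 0)$ and all $y_i = 0$. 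When $n \not\equiv 3 \pmod 4$, either no suitable $S$ exists ($n$ even) or the stratum for $S = \{1, 3, \ldots, n\}$ is empty ($n \equiv 1 \pmod 4$), so $\mathbb{V}(K_{\Q_n})$ is smooth. The main obstacle is the support-propagation analysis above: one must carefully treat the boundary indices $i \in \{1, 2, n-1, n\}$ via the off-diagonal constraints at $j \in \{1, 2, n-1, n\}$ to pin down the exact form of $T$.
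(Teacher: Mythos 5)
Your proposal is correct and follows essentially the same route as the paper: the Jacobian criterion for the complete intersection presentation, the observation that no two consecutive $x_i$ can vanish, a propagation argument showing a row dependence forces $n$ odd with $x_i=y_i=0$ at all odd $i$, and then solving the recursion $x_{2k+2}=-x_{2k}$ to see the stratum is nonempty exactly when $n\equiv 3\pmod 4$. Your write-up is in fact somewhat more explicit than the paper's at the step where the linear dependence is pinned down to the alternating sum over the odd-indexed rows.
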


\begin{proof}
Since there are no directed cycles, we have the following presentation of $\L(\Q_n)$.
\[ \L(\Q_n) = \mathbb{K}[x_1,...,x_n,y_1,..,y_n]/\langle y_1x_1-x_2-1,y_2x_2-x_3-x_1,...,y_{n-1}x_{n-1}-x_n-x_{n-2},y_nx_n-1-x_{n-1}\rangle \]
Let $p\in \mathbb{V}(K_{\Q_n})$.  First, we observe that $x_i(p)$ and $x_{i+1}(p)$ cannot both be zero.  This is clear for $i=1$ from the defining relation for $y_1$, and the general case follows by induction on $i$.

The point $p$ is singular if and only if the associated 
\emph{Jacobian matrix} $Jac_p$ has rank less than $n$.
%
\[ Jac_p:=\gmat{ 
y_1(p) & -1 & 0 & 0 & \cdots  & x_1(p) & 0 & 0 & 0 & \cdots 
\\ -1 & y_2(p) & -1 & 0 & \cdots & 0 & x_2(p) & 0 & 0& \cdots
\\ 0 & -1 & y_3(p) & -1 & \cdots & 0 & 0 & x_3(p) & 0 & \cdots
\\ 0 & 0 & -1 & y_4(p) & \cdots & 0 & 0 & 0 & x_4(p)
\\ \vdots & \vdots & \vdots & \vdots & \ddots & \vdots & \vdots & \vdots & \vdots & \ddots 
}\]
Equivalently, $p$ is singular if and only if there is a non-trivial linear relation among the rows of $Jac_p$.  Clearly, such a relation can only include the $i$th row if $x_i(p)=0$; hence, such a relation can only involve non-adjacent rows of $Jac_p$.  This is only possible when $n$ is odd and when $x_i(p)=y_i(p)=0$ for every odd $i$; furthermore, the relation (up to scaling) must be that the alternating sum of the odd rows of $Jac_p$ is $0$.

Returning to the defining relations, the condition that $x_i(p)=0$ for all odd $i$ is only possible when $n$ is congruent to $3$ mod $4$, in which case $x_{2j}(p)=(-1)^j$.  Since these are non-zero, it follows that $y_{2i}(p)=\frac{x_{2i-1}(p)+x_{2i+1}(p)}{x_{2i(p)}} = 0$.  Consequently, when $n\equiv 3\;(\text{mod }4)$, the unique singular point is given by
\[\forall i\in \{1,2,...,n\},\;\;\; x_i(p) = \left\{ \begin{array}{cc}
0 & \text{if $i$ is odd} \\
(-1)^{\frac{i}{2}} & \text{if $i$ is even}
\end{array}\right\},\;\;\; y_i(p) =0 \]
If $n\not\equiv 3\;(\text{mod }4)$, there is no singular point.
\end{proof}

The family of algebras $\L(\Q_n)$ is one of the most fundamental and elementary in the theory of cluster algebras; in this case, the lower bound $\L(\Q_n)$ coincides with the cluster algebra of Dynkin-type $A_n$.  This and other simple examples suggest that that presence of singularities in $\mathbb{V}(K_{\Q})$ is difficult to predict, and is very sensitive to small changes in the quiver $\Q$.

\section{Skew-symmetrizable lower bound cluster algebras}\label{section: skew}

\def\B{\mathsf{B}}

The body of this paper considered \emph{lower bound cluster algebras defined by ice quivers}, also called \emph{skew-symmetric lower bound cluster algebras}.  However, the proofs and results can be extended to the larger generality of \emph{skew-symmetrizable lower bound cluster algebras}.\footnote{We remark that we still work in \emph{geometric type}, rather than more exotic semifield coordinates.}  This appendix outlines the necessary modifications.  
A skew-symmetrizable lower bound algebra is defined by a \textbf{exchange matrix} $\B$: an $n\times m$ integer matrix (for $n\geq m$) such that there is an $m\times m$ diagonal matrix $\mathsf{D}$ with the property that the top $m\times m$ minor of $\B\mathsf{D}$ is skew-symmetric.

To each index $i\in \{1,2,...,m\}$, we associate a pair of monomials $p_i^+,p_i^-\in \ZZ[x_1,x_2,...,x_n]$ as follows.
\begin{equation}
p_i^+ \coloneqq \prod_{j\in \{1,2,...,n\}}x_j^{\max(\B_{ji},0)},\hspace{1cm} p_i^- \coloneqq\prod_{j\in \{1,2,...,n\}}x_j^{\max(-\B_{ji},0)}
\end{equation}
Each index then determines a Laurent polynomial $x_i'$, called the \textbf{adjacent cluster variable at $i$}, which is defined by the following formula.\footnote{The same apology is due as before. When $i>m$, the element $x_i'$ is not technically a cluster variable, but treating it as one streamlines the process.}
\begin{equation}\label{eq: mutation}
x_i' \coloneqq \left\{\begin{array}{cc}
x_i^{-1}(p_i^++p_i^-) & \text{if $i\in \{1,2,..., m\}$} \\
x_i^{-1} & \text{if $i\in \{m+1,m+2,...,n\}$}
\end{array}\right\}
\end{equation}
The \textbf{lower bound algebra} $\L(\B)$ defined by $\B$ is the subalgebra of $\ZZ[x_1^{\pm1},...,x_n^{\pm1}]$ generated by the variables $x_1,x_2,...,x_n$ and the adjacent cluster variables $x_1',x_2',...,x_n'$.
\begin{ex}
Consider the following $3\times 2$ matrix $\B$, which is skew-symmetrizable with the given $\mathsf{D}$.
\[ \B := \gmat{0 & 3 \\ -2 & 0 \\ 1 & 2},\;\;\; \mathsf{D}:= \gmat{ 3 & 0 \\ 0 & 2} \]
The three adjacent cluster variables are below.
\[ x_1' = \frac{x_3 + x_2^2}{x_1} ,\;\;\; x_2' = \frac{x_1^3x_3^2+1}{x_2},\;\;\; x_3' = x_3^{-1} \]
\end{ex}


%

To translate prior results into the generality of exchange matrices, we associate an ice quiver $\Q(\B)$ to $\B$.  The vertex set of $\Q(\B)$ is the set $\{1,2,...,n\}$, the frozen vertex set is $\{m+1,m+2,...,n\}$, and there is an arrow from $i$ to $j$ if $\B_{ji}>0$ or $\B_{ij}<0$ (and no other arrows).  The lower bound algebra $\L(\B)$ is \emph{not} the lower bound algebra of the ice quiver $\Q(\B)$; the arrows in $\Q(\B)$ only keep track of the sign of entries in $\B$.

The results and proof in this paper hold verbatim for $\L(\B)$, using $\Q(\B)$ in place of $\Q$, for two key reasons.
\begin{itemize}
	\item Our arguments never use the specific exponent of the monomials $p_i^\pm$; rather, we only need to know \emph{if} a variable $x_j$ divides $p_i^\pm$, which corresponds to the existence of an arrow in $\Q(\B)$.
	\item When $\Q(\B)$ is acyclic, the presentation of $\L(\B)$ given in \cite[Corollary 1.17]{BFZ05} and used in the proof of Theorem \ref{thm: grobner} is still valid in the larger generality of skew-symmetrizable cluster algebras.
\end{itemize}

\begin{rem}
The body of the paper is not in this larger generality for reasons of clarity and exposition, rather than any mathematical limitations.
\end{rem}

%
%
%

\bibliography{UniversalBib}
\bibliographystyle{amsalpha}




\end{document}